\documentclass{amsart}

\usepackage{amsfonts}
\usepackage{amssymb, amsmath, mathrsfs}
\usepackage{hhline}

\hyphenation{sa-tis-fied}
\hyphenation{ope-ra-tor}
\hyphenation{ge-ne-ral}
\hyphenation{se-pa-ra-te}

 \newtheorem{thm}{Theorem}[section]
 \newtheorem{cor}[thm]{Corollary}
 \newtheorem{lem}[thm]{Lemma}
 \newtheorem{prop}[thm]{Proposition}
 \theoremstyle{definition}
 
 \theoremstyle{remark}
 \newtheorem{rem}[thm]{Remark}
 
 \numberwithin{equation}{section}

\begin{document}

\title[$\beta-$Sturm Liouville problem]
 {A $\beta-$Sturm Liouville problem associated with \\
 the general quantum operator \footnote{The final version of this preprint will be published in Journal of Difference Equations
and Applications}}

\author[Cardoso]{J.L. Cardoso}

\address{
Dep. de Matem\'atica da Escola de Ci\^encias e Tecnologia \\
Universidade de Tr\'as-os-Montes e Alto Douro (UTAD) \\
Quinta de Prados\\
5001-801 Vila Real\\
Portugal \\
ORCID ID: 0000-0002-7418-3634}
\email{jluis@utad.pt}

\thanks{This research was partially supported by Portuguese Funds through FCT
(Funda\c c\~ao para a Ci\^encia e a Tecnologia) within the Projects
UIDB/00013/2020 and UIDP/00013/2020.}

\subjclass{Primary 39A70; Secondary 47B39, 39A12, 34B24}

\keywords{general quantum operator, $\beta$-difference operator, $\beta$-derivative,
$\beta$-integral, $\beta$-Sturm-Liouville problem}

\date{May 18, 2021}


\begin{abstract}
Let $\,I\subseteq\mathbb{R}\,$ be an interval and $\,\beta:\,I\rightarrow\,I\,$
a strictly increasing and continuous function with a unique fixed point
$\,s_0\in I\,$ that satisfies $\,(s_0-t)(\beta(t)-t)\geq 0\,$ for all $\,t\in I$,
where the equality holds only when $\,t=s_0$.

\noindent The general quantum operator defined by Hamza et al.,
$\,D_{\beta}[f](t):=\displaystyle\frac{f\big(\beta(t)\big)-f(t)}{\beta(t)-t}\,$
if $\,t\neq s_0\,$ and $\,D_{\beta}[f](s_0):=f^{\prime}(s_0)\,$ if
$\,t=s_0,$ generalizes the Jackson $\,q$-operator $\,D_{q}\,$
and also the Hahn (quantum derivative) operator, $\,D_{q,\omega}$.

Regarding a $\beta-$Sturm Liouville eigenvalue problem
associated with the above operator $\,D_{\beta}\,$, we construct
the $\beta-$Lagrange's identity, show that it is self-adjoint
in $\,\mathscr{L}_{\beta}^2([a,b]),$ and exhibit some properties for the
corresponding eigenvalues and eigenfunctions.
\end{abstract}

\maketitle

\section{Introduction}
The $\beta-$operator $\,D_{\beta}\,$ described in the abstract was introduced in
\cite{HSSA:2015} together with the corresponding general quantum difference calculus.
It generalizes the $(q,\omega)-$derivative operator (the Hahn's quantum operator)
\begin{equation}\label{Hahn-operator}
D_{q,\omega}[f](x):=\frac{f\big(qx+\omega\big)-f(x)}{(q-1)x+\omega}\,,
\end{equation}
which, in turn, generalizes both the Jackson $q-$derivative
\begin{equation*}
D_{q}[f](x):=\frac{f(qx)-f(x)}{(q-1)x}\,,
\end{equation*}
and the (forward difference) $\omega-$derivative
$$
\triangle_{\omega}[f](x):=\frac{f(x+\omega)-f(x)}{\omega}\,,
$$
where $\,0<q<1$ and $\,\omega\geq 0\,$ are fixed parameters.

Of particular relevance are the corresponding inverse operators, which enable one to
define, the following integrals, respectively: the Jackson-Thomae-N\"orlund
$\,(q,\omega)-$integral
\begin{equation*}
\int_a^b f\,{\rm d}_{q,\omega}:=
\int_{\omega_0}^b f\,{\rm d}_{q,\omega}-\int_{\omega_0}^a f\,{\rm d}_{q,\omega}\,,
\end{equation*}
the Jackson $\,q-$integral
\begin{equation*}
\int_{a}^{b}f\,{\rm d}_q:=(1-q)\sum_{k=0}^{+\infty}\big[bf(bq^k)-af(aq^k)\big]q^k
\end{equation*}
and the N\"orlund integral
$$
\int_{a}^{b}\!f\,\Delta_{\omega}:=\omega\sum_{k=0}^{+\infty}\big[f(b+k\omega)-f(a+k\omega)\big]\,.
$$
For more details over the $\,q-$integrals see, for example, \cite{AHA:2012}.

Those difference operators together with its inverse operators are very important in
mathematics investigation and in applications, with a large number of publications
and a variety of topics including, but not limited to,
the quantum calculus \cite{AM:2012},
the quantum variational calculus
\cite{MT:2010, CMT:2012, AMT:2012, MT:2014, M:2016},
$\,q-$difference equations properties \cite{AAIM:2007, A:2009, JCF:2017, AHM:2018},
Sturm-Liouville problems \cite{E:1992, AM:2005, ABI:2007, AHM:2012, M:2016, AHM:2018},
Paley-Wiener results \cite{FD:2004, A:2007, DBF:2009},
Sampling theory \cite{A:2003, IZ:2003, A:2004, DBF:2009, AHM:2012, AH:2018, H:2019},
$\,q-$exponential, trigonometric, hyperbolic and other important families of functions
associated with Fourier expansions and corresponding properties connected and derived
from its orthogonality feature \cite{KS:1992, BC:2001, ABC:2003, S:2003, A:2006, C:2006,
P:2007, C:2011, CP:2015, C:2016, C:2018, AA-NC:2019, C:2020}.
These and many other subjects has attracted many researchers.

In 2015, Hamza et al. \cite{HSSA:2015} introduced a general quantum difference operator,
the $\beta-$derivative, generalizing the Hahn's quantum operator
(for certain functions $\,\beta$), and its inverse operator, the $\beta-$integral.
Also in 2015 \cite{HS:2015}, $\,\beta-$H\"older, $\,\beta-$Minkowski, $\,\beta-$Gronwall,
$\,\beta-$Bernoulli and $\,\beta-$Lyapunov inequalities were exhibited. In 2016, it was
proved the existence and uniqueness of solutions of general quantum difference equations
\cite{HS:2016}.
Later, in \cite{FSZ:2017}, some new results on homogeneous second order linear general
quantum difference equations were presented and, in \cite{HSS:2017}, the exponential,
trigonometric and hyperbolic functions were introduced. The theory of $n$th-order linear
general quantum difference equations was developed in \cite{FSZ:2018} while the general
quantum variational calculus was build up in \cite{CM:2018}. In \cite{C:2020}, properties
of the $\beta$-Lebesgue spaces were produced and, recently, in \cite{SFZ:2020},
a general quantum Laplace transform was displayed and studied.

\noindent All of these publications generalizes previous results and are directly
related with the above mentioned general quantum difference operator.

The aim of this work is to obtain a self-adjoint formulation of a
$\beta$-Sturm-Liouville problem and to prove properties for the
corresponding eigenvalues and eigenfunctions. Its construction follows
ideas from \cite{AHM:2018} and from other previous publications.

In section \ref{Sect-2} we recall the definition of the $\beta$-difference
operator and its inverse operator, the $\beta$-integral, together with some of
its properties. Section \ref{Sect-3} is devoted to the $\beta$-Sturm-Liouville
problem to be considered.

The outcome of this work can be found in section \ref{Sect-3}.
We believe that Lemmas 3.1 and 3.2, Corollaries 3.3, 3.5, 3.8, 3.9 and 3.10,
as well as Theorems \ref{beta-Lagrange-Identity} and \ref{beta-Lemma3.5} are original.
Subsection \ref{particular-case} is also new.

\section{The $\beta-$difference operator and the $\beta-$integral}\label{Sect-2}

\subsection{The $\beta-$difference operator}\label{Subsect-2.1}
In the following, $\,I\subseteq\mathbb{R}\,$ will denote an interval and
$\,\beta:\,I\rightarrow\,I\,$ a strictly increasing and continuous function with
a unique fixed point $\,s_0\in I\,$ satisfying
\begin{equation}\label{condition1}
(t-s_0)(\beta(t)-t)\leq 0
\end{equation}
for all $\,t\in I$, where the equality holds only when $\,t=s_0\,$.

For each function $\,f:\,I\rightarrow\,\mathbb{K}\,$ where
$\,\mathbb{K}\,$ is either $\,\mathbb{R}\,$ or $\,\mathbb{C}\,$
\footnote{In fact, $\mathbb{K}=\mathbb{X}$ can represent any Banach space
\cite[p.2]{HSSA:2015}},
Hamza et al. \cite{HSSA:2015} defined the general quantum difference operator
\begin{equation}\label{beta-derivative}
D_{\beta}[f](t):=\left\{
\begin{array}{ccl}
\displaystyle\frac{f\big(\beta(t)\big)-f(t)}{\beta(t)-t} & \mbox{\rm if} &
t\neq s_0\,, \\ [1.2em] f^{\prime}(s_0) & \mbox{\rm if} &
t=s_0\; .
\end{array}\right.,
\end{equation}
provided that $\,f^{\prime}(s_0)\,$ exists. $\,D_{\beta}[f](t)\,$ is
called the $\beta-$derivative of $\,f\,$ at $\,t\in I$.
If $\,f^{\prime}(s_0)\,$ exists then $\,f\,$ is said to be $\beta-$differentiable on $\,I\,$.

It is obvious that when $\,\beta(t)=qt+\omega$ one obtains the Hahn operator
(\ref{Hahn-operator}), being the fixed point given by $\,s_0=\frac{\omega}{1-q}$.

We remark that it is possible to replace the above condition (\ref{condition1}) by
$\,(t-s_0)(\beta(t)-t)\geq 0\,$ for $\,t\in I$.

An introduction to the $\,\beta-$calculus related with this general quantum
difference operator can be found in \cite{HSSA:2015}.

\subsection{The $\beta-$integral}

Defining
$\,\beta^k(t):=(\underbrace{\beta\circ\beta\circ\ldots\circ\beta}_{k\,\mbox{times}})(t)\,$
for $\,t\in I\,$ and $\,k\in \mathbb{N}_0=\mathbb{N}\cup\{0\}$, with $\,\beta^0(t):=t,$
we can consider the $\,\beta-$interval with extreme points $\,a\,$ and $\,b,$
\begin{equation*}
[a,b]_{\beta}:=\big\{\,\beta^n(x)\; |\; (x,n)\in \{a,b\}\times\mathbb{N}_0\,\big\}.
\end{equation*}
Of course that
$$
a,b\in I\quad\Rightarrow\quad [a,b]_{\beta}\subset I\,,
$$
whenever $\,a\,$ and $\,b$ are real numbers.

\vspace{0.5em}
The following proposition can be found in \cite[Lemma 2.1, page 3]{HSSA:2015}.

\noindent
\textbf{Proposition A}.
The sequence of functions $\,\left\{\beta^k(t)\right\}_{k\in\mathbb{N}_0}\,$ converges
uniformly to the constant function $\,\hat{\beta}(t):=s_0\,$ on every compact interval
$\,J\subset\,I\,$ containing $\,s_0$.

\vspace{0.5em}
The quantum difference inverse operator, the $\beta-$integral, with $\,a,b\in I,$
is defined by
\begin{equation}\label{beta-integral-a-b}
\int_{a}^{b}f\,{\rm d}_{\beta}:=\int_{s_0}^{b}f\,{\rm d}_{\beta}-\int_{s_0}^{a}f\,{\rm d}_{\beta}
\end{equation}
where
\begin{equation}\label{beta-integral-s0-x}
\int_{s_0}^{x}f\,{\rm d}_{\beta}:=
\sum_{k=0}^{+\infty}\Big(\beta^k(x)-\beta^{k+1}(x)\Big)f\big(\beta^{k}(x)\big)\,.
\end{equation}
Thus,
\begin{equation}\label{beta-integral-a-b-explicitly}
\int_{a}^{b}f\,{\rm d}_{\beta}=\sum_{k=0}^{+\infty}\Big(\beta^k(b)-\beta^{k+1}(b)\Big)f\big(\beta^{k}(b)\big)-
\sum_{k=0}^{+\infty}\Big(\beta^k(a)-\beta^{k+1}(a)\Big)f\big(\beta^{k}(a)\big)\,.
\end{equation}
If the infinite sum in the right side of (\ref{beta-integral-s0-x}) is convergent then
we say that the function $\,f\,$ is $\,\beta-$integrable in $\,[s_0,x].$
The $\,\beta-$integral in the left side of (\ref{beta-integral-a-b}) is well defined
provided that at least one of the $\,\beta-$integrals in the right side is finite and
we say that $\,f\,$ is $\,\beta-$integrable in $\,[a,b]\,$ if it is both
$\,\beta-$integrable in $\,[s_0,a]\,$ and in $\,[s_0,b].$

\noindent As important particular cases, one obtains the Jackson-Thomae-N\"orlund integral
when $\,\beta(t)=qt+\omega\,$ with $\,0<q<1\,$ and $\,\omega\geq 0$.
The Jackson $\,q-$integral corresponds to $\,\omega=0\,$ in the previous case.
Its fixed points are given by $\,s_0=\frac{\omega}{1-q}$ and $\,s_0=0\,$, respectively.

\subsection{Properties of the $\beta-$derivative and of the $\beta-$integral}

We go back to the the definition of the $\,\beta-$derivative operator
(\ref{beta-derivative}).

\noindent Notice that if $\,f\,$ is differentiable at a point $\,t\in I$, then
$$
\lim_{\beta(t)\rightarrow t}D_{\beta}[f](t)=f^\prime(t)\,,
$$
hence $\,D_{\beta}\,$ is a $\,beta-$analogue of the standard derivative operator.

\noindent The $\,\beta-$derivative satisfies properties which may be regarded as
$\,\beta-$analogues of the corresponding properties for the usual derivative.
For instance, the quantum operator (\ref{beta-derivative}) is linear, i.e.,
\begin{equation*}
D_{\beta}[\alpha f+\beta g](t)=\alpha D_{\beta}[f](t)+\beta D_{\beta}[g](t)\,,
\end{equation*}
where $\,\alpha\,$ and $\,\beta\,$ are any real or complex numbers, and satisfies the
following $\,\beta-$product rule: for $\,t\in I,$
\begin{equation}\label{beta-product-rule}
\begin{array}{lll}
D_{\beta}[f\cdot g](t) & = & D_{\beta}[f](t)\cdot g(t)+f\big(\beta(t)\big)\cdot D_{\beta}[g](t) \\ [0.8em]
 & = & D_{\beta}[g](t)\cdot f(t)+g\big(\beta(t)\big)\cdot D_{\beta}[f](t)
 \end{array}
\end{equation}
if $\,f\,$ and $\,g\,$ are $\,\beta-$differentiable in $\,I$.
Also, $\,f\,$ will be the constant function such that $\,f(t)=f(s_0)\,$ for all $\,t\in I\,$
whenever $\,D_{\beta}[f](t)=0\,$ for all $\,t\in I\,$. For these and other properties of the
general quantum difference operator $\,D_{\beta}\,$ see \cite{HSSA:2015}.
These equalities hold for all $\,t\neq s_0$,
and also for $\,t=s_0\,$ whenever $\,f^\prime(s_0)\,$ and $\,g^\prime(s_0)$ exist.

\subsubsection{The fundamental theorem of $\beta-$calculus}

The following statement of the $\,\beta-$analogue of the fundamental theorem of
calculus can be found in \cite{C:2020}.

\vspace{0.5em}
\textbf{Theorem B}.
Let $\,\beta:I\to I\,$ be a function satisfying the conditions described in subsection
\ref{Subsect-2.1}.
Fix $\,a,b\in I$ and let $\,f:I\rightarrow\mathbb{C}\,$ be a function such that
$\,D_{\beta}[f]\in\mathscr{L}_{\beta}^1[a,b]$. Then:
\begin{enumerate}
\item[{\rm (i)}] The equality
\begin{equation*}
\int_{a}^bD_{\beta}[f]\,{\rm d}_{\beta}=
\left[f(s)-\lim_{k\rightarrow+\infty}f\big(\beta^{k}(s)\big)\right]_{s=a}^b
\end{equation*}
holds, provided the involved limits exist.
\item[{\rm (ii)}] In addition, assuming that $\,a<s_0<b,$ if $\,f\,$ has a discontinuity
of first kind at $\,s_0\,$ then
$$
\int_{a}^bD_{\beta}[f]\,{\rm d}_{\beta}=f(b)-f(a)-\Big(f(s_0^+)-f(s_0^-)\Big).
$$
Of course, if $\,f\,$ is continuous at $\,s_0\,$ then
$$
\int_{a}^bD_{\beta}[f]\,{\rm d}_{\beta}=f(b)-f(a)\,.
$$
\end{enumerate}

\subsubsection{The $\beta-$integration by parts formula}

Now we state the $\,\beta-$analogue of the integration by parts formula \cite{C:2020}.

\vspace{0.5em}
\textbf{Theorem C}.
Let $\,\beta:I\to I\,$ be a function satisfying the conditions described in subsection
\ref{Subsect-2.1}.
Fix $\,a,b\in I$ and two functions $\,f:I\rightarrow\mathbb{C}\,$ and
$\,g:I\rightarrow\mathbb{C}.$ Then:
$$
\int_{a}^b f\cdot D_{\beta}[g]\,{\rm d}_{\beta}=
\left[(f\cdot g)(s)-\lim_{k\rightarrow+\infty}(f\cdot g)\big(\beta^k(s)\big)\right]_{s=a}^b
-\int_{a}^b\big(g\circ\beta\big)\cdot D_{\beta}[f]\,{\rm d}_{\beta}
$$
holds, provided $\,f,g\in\mathscr{L}_{\beta}^1[a,b]$, $\,D_{\beta}[f]\,$ and $\,D_{\beta}[g]\,$
are bounded in $\,[a,b]_{\beta}$, and the limits exist.

If, in addition, $\,f\,$ and $\,g\,$ are continuous at $\,s_0\,$ and $\,a<s_0<b,$ then
$$
\int_{a}^b f\cdot D_{\beta}[g]\,{\rm d}_{\beta}=
\Big[ f\cdot g\Big]_{a}^b-\int_{a}^b \big(g\circ\beta\big)\cdot D_{\beta}[f]\,{\rm d}_{\beta}\,.
$$

\subsection{The spaces $\,\mathscr{L}_{\beta}^p[a,b]\,$ and $\,L_{\beta}^p[a,b]$}

\subsubsection{The space $\mathscr{L}_{\beta}^p[a,b]$}

For $a,b\in I$, we will denote by $\mathscr{L}_{\beta}^p[a,b]$ the set of functions
$f:I\rightarrow\mathbb{\mathbb{C}}$ such that $|f|^p$ is $\beta-$integrable in $[a,b]$, i.e.,
$$\mathscr{L}_{\beta}^p[a,b]=\left\{f:I\rightarrow\mathbb{\mathbb{C}}\;\Big|\;
\int_{a}^b|f|^p{\rm d}_{\beta}<\infty\right\}\;.$$
We also set
$$\mathscr{L}_{\beta}^\infty[a,b]=\left\{f:I\rightarrow\mathbb{\mathbb{C}}\;\Big|\;
\sup_{k\in\mathbb{N}_0}\Big\{\big|f\big(\beta^k(a)\big)\big|,\big|f\big(\beta^k(b)\big)\big|\Big\}<\infty\right\}\;.$$
It was proved in \cite[Corollary 3.4]{C:2020} that if $\,a\leq
s_0\leq b\,$ and $\,1\leq p\leq\infty$, then the set
$\,\mathscr{L}_{\beta}^p[a,b]$, with the usual operations of
addition of functions and multiplication of a function by a number
(real or complex), becomes a linear space over $\,\mathbb{K}$.

\subsubsection{The space $L_{\beta}^p[a,b]$}

For $\,f,g\in\mathscr{L}_{\beta}^p[a,b]$, writing $\,f\sim g\,$ when
\begin{equation}\label{EquivRel}
f\big(\beta^k(a)\big)=g\big(\beta^k(a)\big)\quad \mbox{and}\quad f\big(\beta^k(b)\big)=g\big(\beta^k(b)\big)
\end{equation}
holds for all $\,k=0,1,2,\cdots$, i.e., we say that $\,f\sim g\,$ if $\,f=g\,$ in
$\,[a,b]_{\beta}$.
Clearly, $\,\sim\,$ defines an equivalence relation in
$\,\mathscr{L}_{\beta}^p[a,b]$.
We will represent by $\,L_{\beta}^p[a,b]\,$ the corresponding quotient set:
$$
L_{\beta}^p[a,b]:=\mathscr{L}_{\beta}^p[a,b]\big/\sim\;.
$$
Also in \cite{C:2020} it was proved the following theorems.

\textbf{Theorem D}.
If $\,a\leq s_0\leq b\,$ and $\,1\leq p\leq \infty\,$ then
$\,L_{\beta}^{p}[a,b]\,$ is a normed linear space over $\,\mathbb{R}\,$ or $\,\mathbb{C}\,$ with norm
\begin{equation}\label{beta-p-norm}
\|f\|_{L_{\beta}^p[a,b]}:=\left\{
\begin{array}{ccl}
\displaystyle\left(\int_a^b|f|^p\,{\rm d}_{\beta}\right)^{\frac 1 p}&{\rm if}& 1\leq p<\infty\; ; \\
\rule{0pt}{1.5em}
\displaystyle\sup_{k\in\mathbb{N}_0}\,\Big\{\left|f\big(\beta^k(a)\big)\right|,\left|f\big(\beta^k(b)\big)\right|\Big\} &{\rm if}& p=\infty\;.
\end{array}\right.
\end{equation}

\vspace{0.5em}
In the usual way, here $\,f\in\mathscr{L}_{\beta}^p[a,b]\,$ denotes any representative
of the correspondent class in $\,L_{\beta}^p[a,b]\,$ where it belongs. Notice that,
in view of (\ref{beta-integral-a-b-explicitly}) and (\ref{EquivRel}), the definition of
the norm $\,\|f\|_{L_{\beta}^p[a,b]}\,$ is independent of the chosen representative.

\vspace{0.5em}
\textbf{Theorem E}.
If $\,a\leq s_0\leq b\,$ and $\,1\leq p\leq \infty$, then the following holds:

{\rm (i)} $L_{\beta}^{p}[a,b],$ endowed with the norm (\ref{beta-p-norm}), is a Banach
space for $\,1\leq p\leq\infty$, which is separable if $\,1\leq p<\infty\,$ and reflexive if
$\,1<p<\infty$.

{\rm (ii)} $L_{\beta}^2[a,b]$ is a Hilbert space with inner product
\begin{equation}\label{beta-inner-product}
\langle f,g\rangle_{\beta}:=\int_a^bf\,\overline{g}\,{\rm d}_{\beta}\; ,\quad f,g\in L_{\beta}^2[a,b]\; .
\end{equation}

\subsection{The $\,\beta$-exponential and $\,\beta$-trigonometric functions}\label{beta-exponential-trigonometric}
The following $\,\beta$-analogues of the exponential and trigonometric functions,
as well as some of its properties, were introduced in \cite{HSS:2017}.

It is assumed that $\,p:I\to\mathbb{C}\,$ is a continuous function
at $\,s_0\,$. The $\,\beta$-exponential functions are defined by
\begin{equation}\label{beta-exp}
\mbox{e}_{p,\beta}(t)=\frac{1}{\displaystyle\prod_{k=0}^{\infty}
\left[1-p\left(\beta^k(t)\right)\Big(\beta^k(t)-\beta^{k+1}(t)\Big)\right]}
\end{equation}
and
\begin{equation}\label{beta-Exp}
\mbox{E}_{p,\beta}(t)=\prod_{k=0}^{\infty}
\left[1+p\left(\beta^k(t)\right)\Big(\beta^k(t)-\beta^{k+1}(t)\Big)\right]\,.
\end{equation}
We notice that, under the assumptions on the functions $\,p\,$ and $\,\beta\,$, the
infinite products of (\ref{beta-exp}) and of (\ref{beta-Exp}) are both convergent
\cite[page 28]{HSS:2017}.

On the other hand, the trigonometric functions are defined by
\begin{equation}\label{beta-trigonometric}
\mbox{cos}_{p,\beta}(t)=\frac{\mbox{e}_{ip,\beta}(t)+\mbox{e}_{-ip,\beta}(t)}{2}\;,\quad
\mbox{sin}_{p,\beta}(t)=\frac{\mbox{e}_{ip,\beta}(t)-\mbox{e}_{-ip,\beta}(t)}{2}
\end{equation}
and
\begin{equation}\label{beta-Trigonometric}
\mbox{Cos}_{p,\beta}(t)=\frac{\mbox{E}_{ip,\beta}(t)+\mbox{E}_{-ip,\beta}(t)}{2}\;,\quad
\mbox{Sin}_{p,\beta}(t)=\frac{\mbox{E}_{ip,\beta}(t)-\mbox{E}_{-ip,\beta}(t)}{2}\,.
\end{equation}
It is known that
\begin{equation*}
\mbox{e}_{p,\beta}(t)=\frac{1}{\mbox{E}_{-p,\beta}(t)}\;,\quad \mbox{e}_{p,\beta}(s_0)=1\;,
\quad \mbox{E}_{p,\beta}(s_0)=1
\end{equation*}
and
\begin{equation}\label{trigonometric-beta-derivatives}
\left\{\begin{array}{l}
D_{\beta}\,\mbox{e}_{p,\beta}(t)=p(t)\,\mbox{e}_{p,\beta}(t)\,,\quad
D_{\beta}\,\mbox{E}_{p,\beta}(t)=p(t)\,\mbox{E}_{p,\beta}\big(\beta(t)\big)\,, \\ [1em]
D_{\beta}\,\mbox{sin}_{p,\beta}(t)=p(t)\,\mbox{cos}_{p,\beta}(t)\,,\quad
D_{\beta}\,\mbox{cos}_{p,\beta}(t)=-p(t)\,\mbox{sin}_{p,\beta}(t)\,, \\ [1em]
D_{\beta}\,\mbox{Sin}_{p,\beta}(t)=p(t)\,\mbox{Cos}_{p,\beta}(t)\,,\quad
D_{\beta}\,\mbox{Cos}_{p,\beta}(t)=-p(t)\,\mbox{Sin}_{p,\beta}(t)\,.
\end{array}\right.
\end{equation}

\section{The $\beta-$Sturm-Liouville problem}\label{Sect-3}

\subsection{Simple formulation of the classical Sturm-Liouville problem}

The simplest formulation of the classical Sturm-Liouville problem with separate
type conditions, is the following:
\begin{equation}\label{SLP-1}
\left\{\begin{array}{cc}
-y^{\prime\prime}+\nu(t)y=\lambda y\,,& -\infty<a\leq t\leq b<\infty \\ [0.5em]
a_{11}y(a)+a_{12}y^{\prime}(a)=0\,,& \\ [0.5em]
a_{21}y(b)+a_{12}y^{\prime}(b)=0\,.&
\end{array}\right.,
\end{equation}
where $\,\nu(\cdot)\,$ is a real valued continuous function on $\,[a,b]\,$,
$\,\lambda\in\mathbb{C}\,$ and $\vert a_{i1}\vert+\vert a_{i2}\vert\neq 0\,$,
$\,i=1,2\,$. The major properties of the Sturm-Liouville problem  (\ref{SLP-1})
are that there are only a countable number of real numbers (eigenvalues)
$\,\lambda_1<\lambda_2<\lambda_3<\cdots\,$, with $\lambda_n\to\infty\,$, such
that (\ref{SLP-1}) has nontrivial solutions $\varphi_1,\varphi_2,\varphi_3\,\cdots\,$
(eigenfunctions). Moreover, to each eigenvalue $\,\lambda_n\,$, up to a multiplicative
constant, there corresponds only one solution $\,\varphi_n\,$. Thus, the eigenvalues
are geometrically simple and they are also algebraically simple since they are
simple zeros of the characteristic function associated with (\ref{SLP-1}). Also,
the set $\,\{\varphi_n\}_{n=0}^{\infty}\,$ is an orthonormal basis of $\,L^2(a,b)\,$.
Regarding all of these aspects see \cite{T:1962}.

Annaby and Mansour \cite{AM:2005, AM:2012}, after Exton \cite{E:1983, E:1992},
developed a $q-$Sturm-Liouville theory in the regular setting.
Later, together with Makharesh \cite{AHM:2018}, they make progress in the direction of
a Sturm-Liouville theory in the regular setting, with separate-type boundary conditions,
associated with the $\,D_{q,\omega}\,$ operator (\ref{Hahn-operator}).

In the following, we will establish conditions to develop a similar Sturm-Liouville
theory associated with the general quantum operator $\,D_{\beta}\,$ defined by
(\ref{beta-derivative}).

\subsection{Further properties for the $\beta$-difference operator and
the $\beta$-integral}

In the next lemmas, $\,b\,$ is a fixed real parameter.
\begin{lem}\label{M-V}
Let $\,\beta:I\to I\,$ be a function satisfying the conditions described in subsection
\ref{Subsect-2.1}, being $\,s_0\,$ its unique fixed point, and $\,f:I\rightarrow\mathbb{C}\,$
a real or complex function. Then

$\displaystyle\int_{s_0}^{b}f\big(\beta(t)\big)d_{\beta}t=
\int_{s_0}^{\beta{(b)}}f(u)\left(D_{\beta}\beta^{-1}\right)(u)\,d_{\beta}u\,$.
\end{lem}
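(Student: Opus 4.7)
The plan is to unfold both sides using the series definition (\ref{beta-integral-s0-x}) of the $\beta$-integral and match the resulting sums term-by-term after reindexing, identifying the factor $D_\beta\beta^{-1}$ along the way.

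First I would write the left-hand side explicitly as
\begin{equation*}
\int_{s_0}^{b} f\bigl(\beta(t)\bigr)\,d_\beta t
= \sum_{k=0}^{\infty}\bigl(\beta^k(b)-\beta^{k+1}(b)\bigr)\, f\bigl(\beta^{k+1}(b)\bigr),
\end{equation*}
using $\beta(\beta^k(b))=\beta^{k+1}(b)$. A shift $j=k+1$ turns this into a sum starting from $j=1$, with general term $(\beta^{j-1}(b)-\beta^{j}(b))\,f(\beta^{j}(b))$.

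Next I would expand the right-hand side. Setting $c=\beta(b)$ so that $\beta^k(c)=\beta^{k+1}(b)$, definition (\ref{beta-integral-s0-x}) gives
\begin{equation*}
\int_{s_0}^{\beta(b)} f(u)\,\bigl(D_\beta \beta^{-1}\bigr)(u)\,d_\beta u
= \sum_{k=0}^{\infty}\bigl(\beta^{k+1}(b)-\beta^{k+2}(b)\bigr)\, f\bigl(\beta^{k+1}(b)\bigr)\,\bigl(D_\beta\beta^{-1}\bigr)\bigl(\beta^{k+1}(b)\bigr).
\end{equation*}
After the same shift $j=k+1$, matching with the reindexed LHS reduces the lemma to the pointwise identity
\begin{equation*}
\bigl(D_\beta\beta^{-1}\bigr)\bigl(\beta^{j}(b)\bigr)
= \frac{\beta^{j-1}(b)-\beta^{j}(b)}{\beta^{j}(b)-\beta^{j+1}(b)}, \qquad j\geq 1.
\end{equation*}

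To finish, I would compute $D_\beta\beta^{-1}$ directly from definition (\ref{beta-derivative}). Since $\beta^{-1}\circ\beta$ is the identity, for $u\neq s_0$
\begin{equation*}
\bigl(D_\beta\beta^{-1}\bigr)(u)
= \frac{\beta^{-1}(\beta(u))-\beta^{-1}(u)}{\beta(u)-u}
= \frac{u-\beta^{-1}(u)}{\beta(u)-u},
\end{equation*}
and evaluating at $u=\beta^{j}(b)$ yields exactly the required ratio. The strict monotonicity and continuity of $\beta$ guarantee that $\beta^{-1}$ is defined at all points $\beta^{j}(b)$, so there are no issues with the formula.

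The calculation is essentially mechanical; the only place requiring a little care is the index shift on the right-hand side (the factor $\beta^{k+1}-\beta^{k+2}$ becomes $\beta^{j}-\beta^{j+1}$, which is what makes the denominator of $(D_\beta\beta^{-1})(\beta^{j}(b))$ cancel correctly). I therefore do not anticipate a real obstacle; the lemma is a telescoping-style identity recording how a change of variable $u=\beta(t)$ interacts with the discrete measure $d_\beta$, and everything reduces to the basic identity $D_\beta\beta^{-1}(u)=(u-\beta^{-1}(u))/(\beta(u)-u)$.
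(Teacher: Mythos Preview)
Your proposal is correct and follows essentially the same route as the paper: both proofs expand the $\beta$-integrals via the series definition (\ref{beta-integral-s0-x}), identify $\bigl(D_\beta\beta^{-1}\bigr)(u)=\dfrac{u-\beta^{-1}(u)}{\beta(u)-u}$, and observe that this factor cancels the weight $\beta^{j}(b)-\beta^{j+1}(b)$ against $\beta^{j-1}(b)-\beta^{j}(b)$ term by term. The paper simply simplifies the right-hand side directly to the left-hand series without the explicit index shift you perform, but the computation is identical.
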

\begin{proof}
The proof is straightforward since, by definition, the right side \\
$\;\displaystyle\int_{s_0}^{\beta(b)}f(u)\left(D_{\beta}\beta^{-1}\right)(u)\,d_{\beta}u\,$
equals

$\displaystyle\sum_{k=0}^{\infty}
f\Big(\beta^{k}\left(\beta(b)\right)\Big)
\frac{\beta^{-1}\Big(\beta^{k}\big(\beta(b)\big)\Big)-\beta^{k}\big(\beta(b)\big)\Big)}{
\beta^{k}\big(\beta(b)\big)\Big)-\beta\Big(\beta^{k}\big(\beta(b)\big)\Big)\Big)}
\Big(\beta^{k}\big(\beta(b)\big)-\beta^{k+1}\big(\beta(b)\big)\Big)\,$,

\noindent
which becomes, after simplification,

$\displaystyle\sum_{k=0}^{\infty}
f\Big(\beta\left(\beta^{k}(b)\right)\Big)\Big(\beta^{k}(b)-\beta^{k+1}(b)\Big)=
\displaystyle\int_{s_0}^{b}f\big(\beta(t)\big)d_{\beta}t\,.$
\end{proof}

In the following, in order to simplify the notation of the inner-product
(\ref{beta-inner-product}) we will consider $\,\langle\,\cdot\,,\,\cdot\,\rangle\,$
rather than $\,\langle\,\cdot\,,\,\cdot\,\rangle_{\beta}\,.$ Of course that when we
write the interval $\,(s_0,b)\,$ we admit that $\,s_0\leq b\,$.
If $\,b<s_0\,$ then one must replace that interval by $\,(b,s_0)\,$.
The corresponding proofs follow exactly the same steps.
\begin{lem}\label{beta-Lemma3.2}
Let $\,\beta:I\to I\,$ be a function satisfying the conditions described in subsection
\ref{Subsect-2.1}, being $\,s_0\,$ its unique fixed point, and
$\,f,g\in L_{\beta}^{2}\big(s_0,b\big)\,$ be both continuous
functions at $\,s_0\,$. Then, for $\,t\in\big(s_0,b\big]\,$ we have
\begin{enumerate}
\item[(i)]
$\,\displaystyle\left(D_{\beta}f\right)\left(\beta^{-1}(t)\right)=
\left(D_{\beta^{-1}}f\right)(t)\,;$

\vspace{0.5em}
\item[(ii)] $\,\langle D_{\beta}f\,,\,g\rangle=
f(b)g\overline{\big(\beta^{-1}(b)\big)}-f(s_0)g\overline{\big(s_0\big)}+
\langle f\,,\,-D_{\beta}\beta^{-1}D_{\beta^{-1}}g\rangle\,;$

\vspace{0.5em}
\item[(iii)] $\,\langle -D_{\beta}\beta^{-1}D_{\beta^{-1}}f\,,\,g\rangle=
f\big(s_0\big)\overline{g\big(s_0\big)}-f\Big(\beta^{-1}(b)\Big)\overline{g(b)}
+\langle f\,,\,D_{\beta}g\rangle\,.$
\end{enumerate}
\end{lem}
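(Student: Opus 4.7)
Part (i) is a one-line check: substituting $s=\beta^{-1}(t)$ in the definition \eqref{beta-derivative} shows that both $(D_\beta f)(\beta^{-1}(t))$ and $(D_{\beta^{-1}}f)(t)$ equal $[f(t)-f(\beta^{-1}(t))]/[t-\beta^{-1}(t)]$ for $t\neq s_0$; at $t=s_0$ both reduce to $f^{\prime}(s_0)$ by continuity, since $\beta^{-1}$ is also strictly increasing with fixed point $s_0$.

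For (ii), I proceed in three stages. First, since $f$ and $\overline g$ are continuous at $s_0$, applying Theorem~C (or, equivalently, Theorem~B together with the $\beta$-product rule \eqref{beta-product-rule}) yields
\[
\langle D_\beta f,g\rangle=f(b)\overline{g(b)}-f(s_0)\overline{g(s_0)}-\int_{s_0}^{b}f(\beta(t))\,D_\beta\overline g(t)\,d_\beta t.
\]
Second, rewriting $D_\beta\overline g(t)=D_{\beta^{-1}}\overline g(\beta(t))$ by part~(i), the residual integrand takes the form $h(\beta(t))$ with $h(u):=f(u)D_{\beta^{-1}}\overline g(u)$, so Lemma~\ref{M-V} converts the integral into $\int_{s_0}^{\beta(b)}f(u)(D_\beta\beta^{-1})(u)D_{\beta^{-1}}\overline g(u)\,d_\beta u$. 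Third, and this is the decisive manipulation, I split $\int_{s_0}^{\beta(b)}=\int_{s_0}^{b}+\int_{b}^{\beta(b)}$. The series representation \eqref{beta-integral-s0-x} shows that the tail telescopes to the single value $\int_{b}^{\beta(b)}\phi\,d_\beta=(\beta(b)-b)\phi(b)$. Inserting $\phi$ and simplifying with $(\beta(b)-b)(D_\beta\beta^{-1})(b)=b-\beta^{-1}(b)$ and the explicit form of $D_{\beta^{-1}}\overline g(b)$, this tail evaluates to $f(b)\overline{g(b)}-f(b)\overline{g(\beta^{-1}(b))}$, which precisely cancels the previously obtained boundary term $f(b)\overline{g(b)}$ and replaces it by $f(b)\overline{g(\beta^{-1}(b))}$. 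The surviving integral $-\int_{s_0}^{b}f(u)(D_\beta\beta^{-1})(u)D_{\beta^{-1}}\overline g(u)\,d_\beta u$ is then identified with $\langle f,-(D_\beta\beta^{-1})D_{\beta^{-1}}g\rangle$, using that $D_\beta\beta^{-1}$ is real and that $\overline{D_{\beta^{-1}}g}=D_{\beta^{-1}}\overline g$.

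For (iii), rather than repeating the calculation, the cleanest route is to conjugate the identity in (ii), swap the letters $f\leftrightarrow g$, and invoke $\overline{\langle u,v\rangle}=\langle v,u\rangle$ together with the reality of $D_\beta\beta^{-1}$; this delivers (iii) with no further work. The main obstacle in this proof is bookkeeping the shift between $b$ and $\beta^{-1}(b)$: it does not arise from the integration by parts itself but from the enlargement $[s_0,b]\to[s_0,\beta(b)]$ of the integration interval induced by Lemma~\ref{M-V}, and it is exactly the single-point telescopic correction at $b$ that manufactures the boundary value $f(b)\overline{g(\beta^{-1}(b))}$ appearing in the statement.
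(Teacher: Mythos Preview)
Your proof is correct and follows essentially the same route as the paper's: integration by parts via Theorem~C, change of variable via Lemma~\ref{M-V}, then splitting $\int_{s_0}^{\beta(b)}=\int_{s_0}^{b}+\int_{b}^{\beta(b)}$ and evaluating the single-term tail to produce the shifted boundary value $f(b)\overline{g(\beta^{-1}(b))}$. The only cosmetic differences are that you invoke part~(i) before Lemma~\ref{M-V} whereas the paper does so afterwards, and that you spell out the conjugate-and-swap derivation of (iii), which the paper simply declares ``a consequence of (ii).''
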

\begin{proof}
The proof of (i) is trivial.
Let's prove (ii):

\noindent By the $\beta$-integration by parts theorem (Theorem C) we have
\[
\begin{array}{l}
\langle D_{\beta}f\,,\,g\rangle\:=\:\displaystyle
\int_{s_0}^{b}\Big(D_{\beta}f\Big)(t)\overline{g(t)}d_{\beta}t \\ [0.5em]
\hspace{4.6em}=\displaystyle\:f(b)\overline{g(b)}-f(s_0)\overline{g(s_0)}-
\int_{s_0}^{b}f\big(\beta(t)\big)\overline{\Big(D_{\beta}g\Big)(t)}d_{\beta}t\,.
\end{array}
\]
Considering $\,u=\beta(t)\,$ then, by Lemma \ref{M-V}, one obtains
\[
\begin{array}{l}
\langle D_{\beta}f\,,\,g\rangle =
\displaystyle\:f(b)\overline{g(b)}-f(s_0)\overline{g(s_0)} \hspace{5em} \\ [0.3em]
\hspace{5em}\displaystyle-\int_{s_0}^{\beta(b)}\!f(u)\Big(D_{\beta}\beta^{-1}\Big)(u)
\overline{\Big(D_{\beta}g\Big)\left(\beta^{-1}(u)\right)}d_{\beta}u\,,
\end{array}
\]
which, by (i), becomes
\begin{equation}\label{break}
\langle D_{\beta}f\,,\,g\rangle=
\displaystyle f(b)\overline{g(b)}-f(s_0)\overline{g(s_0)}-
\int_{s_0}^{\beta(b)}f(u)D_{\beta}\beta^{-1}(u)
\overline{D_{\beta^{-1}}g(u)}d_{\beta}u.
\end{equation}
Writing (see (iv) of Lemma 3.5 in \cite{HSSA:2015})
\[
\begin{array}{l}
\displaystyle\int_{s_0}^{\beta(b)}f(u)D_{\beta}\beta^{-1}(u)
\overline{D_{\beta^{-1}}g(u)}d_{\beta}u=
\int_{s_0}^{b}f(u)D_{\beta}\beta^{-1}(u)
\overline{D_{\beta^{-1}}g(u)}d_{\beta}u \\ [0.8em]
\hspace{10em}\displaystyle+\int_{b}^{\beta(b)}f(u)D_{\beta}\beta^{-1}(u)
\overline{D_{\beta^{-1}}g(u)}d_{\beta}u\,.
\end{array}
\]
then, by Corollary 3.7 of \cite{HSSA:2015},
\[
\begin{array}{l}
\displaystyle\int_{s_0}^{\beta(b)}f(u)D_{\beta}\beta^{-1}(u)
\overline{D_{\beta^{-1}}g(u)}d_{\beta}u=\int_{s_0}^{b}f(u)D_{\beta}\beta^{-1}(u)
\overline{D_{\beta^{-1}}g(u)}d_{\beta}u \\ [0.8em]
\hspace{10em}+\big(\beta(b)-b\big)f(b)D_{\beta}\beta^{-1}(b)
\overline{D_{\beta^{-1}}g(b)}\,.
\end{array}
\]
Introducing this last identity in (\ref{break}) one gets
\[
\begin{array}{l}
\displaystyle\langle D_{\beta}f\,,\,g\rangle =
\displaystyle\:f(b)\overline{g(b)}-f(s_0)\overline{g(s_0)}
-\int_{s_0}^{b}f(u)D_{\beta}\beta^{-1}(u)
\overline{D_{\beta^{-1}}g(u)}d_{\beta}u \\ [0.8em]
\hspace{10em}-\big(\beta(b)-b\big)f(b)D_{\beta}\beta^{-1}(b)
\overline{D_{\beta^{-1}}g(b)}\,.
\end{array}
\]
Since
\[
\begin{array}{l}
\displaystyle\:f(b)\overline{g(b)}-f(s_0)\overline{g(s_0)}
-\big(\beta(b)-b\big)f(b)\Big(D_{\beta}\beta^{-1}\Big)(b)
\overline{\Big(D_{\beta^{-1}}g\Big)(b)}= \\ [0.8em]
\hspace{10em}\displaystyle\:
f(b)g\overline{\big(\beta^{-1}(b)\big)}-f(s_0)g\overline{\big(s_0\big)}\,.
\end{array}
\]
we obtain
\[
\begin{array}{l}
\displaystyle\langle D_{\beta}f\,,\,g\rangle =
\displaystyle\:f(b)g\overline{\big(\beta^{-1}(b)\big)}-f(s_0)g\overline{\big(s_0\big)}
-\langle f\,,\,D_{\beta}\beta^{-1}D_{\beta^{-1}}g\rangle\,,
\end{array}
\]
which proves (ii).

\vspace{0.5em}
Finally, (iii) is a consequence of (ii).
\end{proof}

\noindent Taking into account the $\beta$-inner-product (\ref{beta-inner-product}),
we thus have the following corollary. It is a direct consequence of (ii), Lemma
\ref{beta-Lemma3.2}, therefore its proof will be omitted. Here $\,\beta\,$ satisfies
the same conditions admitted in Lemmas \ref{M-V} and \ref{beta-Lemma3.2}.
\begin{cor}\label{beta-Lemma3.2-a-b}
Let $\,a\leq s_0\leq b\,$ and $\,f,g\in L_{\beta}^{2}(a,b)\,$ be both
continuous functions at $\,s_0\,$. Then,
\begin{enumerate}
\item[(i)]
$\,\displaystyle\left(D_{\beta}f\right)\left(\beta^{-1}(t)\right)=
\left(D_{\beta^{-1}}f\right)(t)\,;$

\vspace{0.5em}
\item[(ii)] $\,\langle D_{\beta}f\,,\,g\rangle=
f(b)g\overline{\big(\beta^{-1}(b)\big)}-f(a)\overline{g\big(\beta^{-1}(a)\big)}+
\langle f\,,\,-D_{\beta}\beta^{-1}D_{\beta^{-1}}g\rangle\,.$
\end{enumerate}
\end{cor}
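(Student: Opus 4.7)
The plan is straightforward: part (i) is literally the same pointwise identity as (i) of Lemma \ref{beta-Lemma3.2}, since it does not depend on the underlying interval at all; there is nothing to prove beyond quoting that statement. So the entire content of the corollary lies in part (ii), and my strategy will be to split the $\beta$-integral at the fixed point $s_0$ and apply Lemma \ref{beta-Lemma3.2} twice.

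More precisely, since $a\leq s_0\leq b$, the definition of the $\beta$-integral in (\ref{beta-integral-a-b}) yields
\[
\langle D_{\beta}f\,,\,g\rangle
=\int_a^b(D_{\beta}f)\,\overline{g}\,d_{\beta}
=\int_{s_0}^b(D_{\beta}f)\,\overline{g}\,d_{\beta}
-\int_{s_0}^a(D_{\beta}f)\,\overline{g}\,d_{\beta}\,.
\]
To the first integral I apply Lemma \ref{beta-Lemma3.2}(ii) directly, obtaining the boundary contribution $f(b)\overline{g(\beta^{-1}(b))}-f(s_0)\overline{g(s_0)}$ together with $-\langle f,D_{\beta}\beta^{-1}D_{\beta^{-1}}g\rangle$ restricted to $(s_0,b)$. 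To the second integral I apply the same lemma with $a$ in place of $b$; this is legitimate precisely by the remark preceding Lemma \ref{beta-Lemma3.2}, which says that when the upper endpoint lies below $s_0$ one replaces $(s_0,b)$ by $(b,s_0)$ and the proof goes through verbatim. This produces the boundary contribution $f(a)\overline{g(\beta^{-1}(a))}-f(s_0)\overline{g(s_0)}$ and a matching inner-product term on $(s_0,a)$.

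Combining, the two boundary terms $f(s_0)\overline{g(s_0)}$ cancel (this cancellation is where the assumption of continuity at $s_0$ of both $f$ and $g$ is used, to make sense of the values at $s_0$), and the two residual inner-product pieces recombine into a single $\beta$-integral over $(a,b)$ via (\ref{beta-integral-a-b}) once again. What remains is exactly
\[
f(b)\overline{g(\beta^{-1}(b))}-f(a)\overline{g(\beta^{-1}(a))}
+\langle f,\,-D_{\beta}\beta^{-1}D_{\beta^{-1}}g\rangle,
\]
as claimed.

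The only subtlety I anticipate is bookkeeping the sign in the second application of Lemma \ref{beta-Lemma3.2}: the expression $\int_{s_0}^a$ is an integral whose upper endpoint sits below $s_0$, so one must track carefully how the boundary term at $a$ enters, and verify that the ``$\beta^{-1}(b)$'' prescription of the lemma becomes the desired ``$\beta^{-1}(a)$'' prescription here. Since this is essentially a relabelling covered by the author's own remark, no genuine new argument is needed, which is why the corollary is presented without proof.
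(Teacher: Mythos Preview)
Your proposal is correct and matches the paper's approach: the paper omits the proof entirely, stating only that it ``is a direct consequence of (ii), Lemma~\ref{beta-Lemma3.2}''. Your splitting of the $\beta$-integral at $s_0$ and twofold application of Lemma~\ref{beta-Lemma3.2}(ii) (using the remark preceding that lemma for the piece with endpoint $a\leq s_0$) is precisely the natural way to unpack that sentence, and your sign bookkeeping is right.
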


\subsection{The $\beta$-eigenvalue problem}

Consider the following $\beta$-Sturm-Liouville problem ($\,\beta$-SLP) in
$\,L_{\beta}^2\left(s_0,b\right)\,$ defined by the $\,\beta$-difference equation
\begin{equation}\label{ell-beta-Operator}
\ell_{\beta}y(t):=-D_{\beta}\beta^{-1}D_{\beta^{-1}}D_{\beta}y(t)+r(t)y(t)=
\lambda\,y(t)\,,
\end{equation}
with $\,s_0\leq t\leq b<\infty\,$, $\,\lambda\in\mathbb{C}\,$, and the boundary
conditions
\begin{equation}\label{Boundary-Conditions}
\left\{\begin{array}{l}
\displaystyle a_1y(s_0)+a_2D_{\beta^{-1}}y\left(s_0\right)=0 \\ [0.5em]
\displaystyle b_1y(b)+b_2D_{\beta^{-1}}y(b)=0\,.
\end{array}\right.
\end{equation}
We assume that $\,\beta:I\to I\,$ is a function satisfying the conditions described in
subsection \ref{Subsect-2.1}, being $\,s_0\,$ its unique fixed point, and also
assume that $\,r(t)\,$ is a real valued continuous function on $\,[s_0,b]\,$
and $\,\vert a_1\vert+\vert a_2\vert\neq 0\neq \vert b_1\vert+\vert b_2\vert$.

\vspace{0.5em}
The operator $\,\ell_{\beta}\,$ (\ref{ell-beta-Operator}) satisfies the following
$\,\beta$-Lagrange's identity.
\begin{thm}\label{beta-Lagrange-Identity-Theorem}
If $\,y\,,\,z\,\in L_{\beta}^2\left(s_0,b\right)\,$ then
\begin{equation}\label{beta-Lagrange-Identity}
\int_{s_0}^{b}\left[\Big(\ell_{\beta}y(t)\Big)\overline{z(t)}-
y(t)\overline{\Big(\ell_{\beta}z(t)\Big)}\right]d_{\beta}u=
[y,\overline{z}](b)-[y,\overline{z}]\left(s_0\right)\,,
\end{equation}
where
\begin{equation}\label{beta-1-Wronskian}
\,[y,z](t)=y(t)D_{\beta^{-1}}z(t)-z(t)D_{\beta^{-1}}y(t)
\end{equation}
\end{thm}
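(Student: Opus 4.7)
The plan is to compute $\langle \ell_\beta y,z\rangle - \langle y,\ell_\beta z\rangle$ directly, using the two adjoint-type identities from Lemma \ref{beta-Lemma3.2}. Because $r(t)$ is real, the potential terms $\langle ry,z\rangle$ and $\langle y,rz\rangle$ coincide and drop out, so the only work is to move the third-order difference operator $-D_{\beta}\beta^{-1}D_{\beta^{-1}}D_{\beta}$ from one factor to the other and track the boundary contributions produced at each step.

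First I would apply Lemma \ref{beta-Lemma3.2}(iii) with $f=D_{\beta}y$ and $g=z$ to rewrite
\[
\langle -D_{\beta}\beta^{-1}D_{\beta^{-1}}(D_{\beta}y),z\rangle =
(D_{\beta}y)(s_0)\overline{z(s_0)} - (D_{\beta}y)\!\left(\beta^{-1}(b)\right)\overline{z(b)} + \langle D_{\beta}y,D_{\beta}z\rangle .
\]
Then I would apply Lemma \ref{beta-Lemma3.2}(ii) with $f=y$, $g=D_{\beta}z$ to the remaining inner product, obtaining
\[
\langle D_{\beta}y,D_{\beta}z\rangle =
y(b)\,\overline{(D_{\beta}z)\!\left(\beta^{-1}(b)\right)} - y(s_0)\overline{(D_{\beta}z)(s_0)} + \langle y,-D_{\beta}\beta^{-1}D_{\beta^{-1}}(D_{\beta}z)\rangle .
\]
Subtracting $\langle y,\ell_{\beta}z\rangle$ then cancels this last inner product exactly, leaving only the four boundary terms coming from the two applications.

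The last step is cosmetic: converting the remaining boundary terms into the Wronskian form \eqref{beta-1-Wronskian}. By part (i) of Lemma \ref{beta-Lemma3.2}, $(D_{\beta}f)\!\left(\beta^{-1}(b)\right) = (D_{\beta^{-1}}f)(b)$ for $f=y$ and $f=z$. At $t=s_0$ one has $\beta(s_0)=s_0$ hence $\beta^{-1}(s_0)=s_0$, so both $(D_{\beta}f)(s_0)$ and $(D_{\beta^{-1}}f)(s_0)$ coincide with $f'(s_0)$. Grouping the four terms accordingly yields
\[
\left[y(b)\overline{(D_{\beta^{-1}}z)(b)} - \overline{z(b)}(D_{\beta^{-1}}y)(b)\right] - \left[y(s_0)\overline{(D_{\beta^{-1}}z)(s_0)} - \overline{z(s_0)}(D_{\beta^{-1}}y)(s_0)\right],
\]
which is exactly $[y,\overline{z}](b)-[y,\overline{z}](s_0)$.

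The main obstacle I anticipate is bookkeeping rather than anything deep: making sure the argument $\beta^{-1}(b)$ and the conjugation commute with $D_{\beta^{-1}}$ correctly, and verifying that the identification $(D_{\beta}f)(s_0)=(D_{\beta^{-1}}f)(s_0)$ is legitimate so that the $s_0$-boundary term can be rewritten in Wronskian form. This relies on continuous $\beta$-differentiability of $y$ and $z$ at the fixed point $s_0$, which is built into the hypotheses of Lemma \ref{beta-Lemma3.2}; once that is clear, the entire identity follows from two symmetric applications of the $\beta$-integration by parts already encoded in Lemma \ref{beta-Lemma3.2}.
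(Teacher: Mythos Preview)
Your proposal is correct and follows essentially the same route as the paper: apply Lemma~\ref{beta-Lemma3.2}(iii) with $f=D_{\beta}y$, $g=z$, then Lemma~\ref{beta-Lemma3.2}(ii) with $f=y$, $g=D_{\beta}z$, convert the boundary terms via part~(i), and use the reality of $r$ to cancel the potential contribution. Your treatment of the $s_0$ boundary term (both $\beta$- and $\beta^{-1}$-derivatives reduce to $f'(s_0)$ at the fixed point) is in fact more explicit than the paper's, which simply invokes part~(i) there without comment.
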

\begin{proof}
Consider $\,y\,,\,z\,\in L_{\beta}^2\left(s_0,b\right)\,$.

\noindent On one hand, using (iii), Lemma \ref{beta-Lemma3.2}, with
$\,f(t)=D_{\beta}y(t)\,$ and $\,g(t)=z(t)\,$, we have
$$\langle -D_{\beta}\beta^{-1}D_{\beta^{-1}}D_{\beta}y,z\rangle=
-D_{\beta}y\left(\beta^{-1}(b)\right)\overline{z(b)}+
D_{\beta}y\left(s_0\right)\overline{z\left(s_0\right)}
+\langle D_{\beta}y\,,D_{\beta}z\rangle.$$
By (i) of Lemma \ref{beta-Lemma3.2} we then obtain
\begin{equation}\label{Eq3.16}
\,\langle -D_{\beta}\beta^{-1}D_{\beta^{-1}}D_{\beta}y,z\rangle=
-D_{\beta^{-1}}y(b)\,\overline{z(b)}+
D_{\beta^{-1}}y\left(s_0\right)\overline{z\left(s_0\right)}
+\langle D_{\beta}y,D_{\beta}z\rangle.
\end{equation}
On the other hand, using (ii), Lemma \ref{beta-Lemma3.2}, with $\,f(t)=y(t)\,$ and
$\,g(t)=D_{\beta}z(t)\,$, we get
$$\langle D_{\beta}y\,,D_{\beta}z\rangle =
y(b)\overline{D_{\beta}z\left(\beta^{-1}(b)\right)}-
y\left(s_0\right)\overline{D_{\beta}z\left(s_0\right)}+
\langle y,-D_{\beta}\beta^{-1}D_{\beta^{-1}}D_{\beta}z\rangle,$$
which becomes, by (i) of Lemma \ref{beta-Lemma3.2},
\begin{equation}\label{Eq3.17}
\langle D_{\beta}y\,,D_{\beta}z\rangle =
y(b)\overline{D_{\beta^{-1}}z(b)}-
y\left(s_0\right)\overline{D_{\beta^{-1}}z\left(s_0\right)}+
\langle y,-D_{\beta}\beta^{-1}D_{\beta^{-1}}D_{\beta}z\rangle.
\end{equation}
Combining (\ref{Eq3.16}) with (\ref{Eq3.17}) it results
\[
\begin{array}{l}
\displaystyle\,\langle -D_{\beta}\beta^{-1}D_{\beta^{-1}}D_{\beta}y\,,\,z\rangle=
y(b)\overline{\left(D_{\beta^{-1}}z\right)(b)}-
\left(D_{\beta^{-1}}y\right)(b)\,\overline{z(b)} \\ [0.8em]
\hspace{4em} \displaystyle
-y\left(s_0\right)\overline{\left(D_{\beta^{-1}}z\right)\left(s_0\right)}
+D_{\beta^{-1}}y\left(s_0\right)\overline{z\left(s_0\right)}
+\langle y,-D_{\beta}\beta^{-1}D_{\beta^{-1}}D_{\beta}z\rangle
\end{array}
\]
which is equivalent to
\begin{equation}\label{beta-Lagrange-Identity-2}
\displaystyle\,\langle -D_{\beta}\beta^{-1}D_{\beta^{-1}}D_{\beta}y\,,\,z\rangle=
[y,\overline{z}](b)-[y,\overline{z}]\left(s_0\right)
+\langle y,-D_{\beta}\beta^{-1}D_{\beta^{-1}}D_{\beta}z\rangle\,.
\end{equation}
Now we easily derive the $\,\beta$-Lagrange's identity
(\ref{beta-Lagrange-Identity}): using (\ref{beta-Lagrange-Identity-2}) and
the fact that $\,r(t)\,$ is real we have
\[
\begin{array}{lll}
\displaystyle\,\langle\,\ell_{\beta}y\,,\,z\,\rangle & = &
\langle\,D_{\beta}\beta^{-1}D_{\beta^{-1}}D_{\beta}y(t)+r(t)y(t)\,,\,z(t)\,\rangle \\ [0.7em]
& = & \langle\, D_{\beta}\beta^{-1}D_{\beta^{-1}}D_{\beta}y(t)\,,\,z(t)\,\rangle +
\langle\, r(t)y(t)\,,\,z(t)\,\rangle \\ [0.7em]
& = & [y,\overline{z}](b)-[y,\overline{z}]\left(s_0\right)
+\langle\, y(t),-D_{\beta}\beta^{-1}D_{\beta^{-1}}D_{\beta}z(t)\,\rangle \\ [0.7em]
& & +\langle \,y(t)\,,\,r(t)z(t)\,\rangle \\ [0.7em]
& = & [y,\overline{z}](b)-[y,\overline{z}]\left(s_0\right)
+\langle\, y(t),-D_{\beta}\beta^{-1}D_{\beta^{-1}}D_{\beta}z(t)+r(t)z(t)\,\rangle \\ [0.7em]
& = & [y,\overline{z}](b)-[y,\overline{z}]\left(s_0\right)+\langle\, y\,,\,\ell_{\beta}z\,\rangle
\end{array}
\]
\end{proof}
The following corollary follows now easily.
\begin{cor}\label{Self-Adjoint-Operator}
The $\,\beta$-Sturm Liouville eigenvalue problem (\ref{ell-beta-Operator}-
\ref{Boundary-Conditions}) is self-adjoint in $L_{\beta}^2\left(s_0,b\right)$
\big(i.e., $\ell_{\beta}$ is self-adj. in
$\big\{y\in L_{\beta}^2\left(s_0,b\right):\:y
\mbox{\,satisfies\,(\ref{Boundary-Conditions})}\big\}$\big).
\end{cor}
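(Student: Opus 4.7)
The plan is to read off the corollary directly from the $\beta$-Lagrange identity (\ref{beta-Lagrange-Identity}) by showing that, under the separated boundary conditions (\ref{Boundary-Conditions}), the boundary term on the right-hand side vanishes identically for every pair of admissible $y,z$. Concretely, I want to prove that for any $y,z$ in the domain
\[
\mathcal{D}(\ell_\beta):=\bigl\{u\in L_{\beta}^2(s_0,b)\;:\;u \text{ satisfies (\ref{Boundary-Conditions})}\bigr\},
\]
one has $[y,\overline{z}](b)=0$ and $[y,\overline{z}](s_0)=0$, so that Theorem~\ref{beta-Lagrange-Identity-Theorem} immediately yields $\langle \ell_\beta y,z\rangle=\langle y,\ell_\beta z\rangle$.

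The key step is a standard linear-algebra argument applied at each endpoint. At $t=b$, the condition $b_1 y(b)+b_2 D_{\beta^{-1}}y(b)=0$ says that the nonzero vector $(b_1,b_2)\in\mathbb{K}^2$ is orthogonal (with respect to the real bilinear pairing) to the column $\bigl(y(b),D_{\beta^{-1}}y(b)\bigr)^{\mathsf T}$. Since $b_1,b_2$ are real (as needed for the problem to be formally symmetric), $\overline{z}$ satisfies the same boundary condition as $z$, so the column $\bigl(\overline{z}(b),D_{\beta^{-1}}\overline{z}(b)\bigr)^{\mathsf T}$ is also annihilated by $(b_1,b_2)$. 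Hence these two columns are linearly dependent, and the determinant
\[
[y,\overline{z}](b)=y(b)\,D_{\beta^{-1}}\overline{z}(b)-\overline{z}(b)\,D_{\beta^{-1}}y(b)
\]
vanishes. The identical argument with $(a_1,a_2)$ in place of $(b_1,b_2)$ gives $[y,\overline{z}](s_0)=0$.

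Putting these two vanishings into (\ref{beta-Lagrange-Identity}) yields $\langle\ell_\beta y,z\rangle=\langle y,\ell_\beta z\rangle$ for all $y,z\in\mathcal{D}(\ell_\beta)$, which is exactly the self-adjointness assertion. The only delicate point — and thus the main (mild) obstacle — is to justify that $\overline{z}$ belongs to the same boundary-condition class as $z$; this requires the tacit assumption that $a_1,a_2,b_1,b_2$ are real, which is the natural convention in the classical Sturm–Liouville setting recalled in (\ref{SLP-1}) and which should be made explicit when invoking the corollary. All other manipulations are formal and follow at once from Theorem~\ref{beta-Lagrange-Identity-Theorem} and the definition (\ref{beta-1-Wronskian}) of the bracket $[\,\cdot\,,\,\cdot\,]$.
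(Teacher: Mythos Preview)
Your proof is correct and follows essentially the same approach as the paper: both arguments invoke Theorem~\ref{beta-Lagrange-Identity-Theorem} and then verify that the separated boundary conditions force $[y,\overline{z}](s_0)=[y,\overline{z}](b)=0$. The paper does this by a direct case split on whether $a_2$ (resp.\ $b_2$) is zero, whereas you package the same computation as a linear-dependence/determinant argument; your remark that the reality of $a_1,a_2,b_1,b_2$ is needed so that $\overline{z}$ satisfies the same boundary conditions as $z$ is a useful point that the paper leaves implicit.
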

\begin{proof}
Let $\,y\,$ and $\,z\,$ satisfy the boundary conditions (\ref{Boundary-Conditions}).
\begin{enumerate}
\item[(i)] If $\,a_2\neq 0\,$ then, from (\ref{beta-1-Wronskian}),
$$[y,\overline{z}]\left(s_0\right)=y\left(s_0\right)\overline{
\left(-\frac{a_1}{a_2}\,z\left(s_0\right)\right)}+\frac{a_1}{a_2}\,y\left(s_0\right)
\overline{z\left(s_0\right)}=0\,.$$
\item[(ii)] If $\,a_2=0\,$ then, since $\,|a_1|+|a_2|\neq 0\,$, one must have $\,a_1\neq 0\,$
which, by (\ref{Boundary-Conditions}), implies that $\,y\left(s_0\right)=0\,$.

\noindent By similar arguments it follows that $\,z\left(s_0\right)=0\,$.
Thus, $\,[y,\overline{z}]\left(s_0\right)=0\,.$
\end{enumerate}
Arguing as above one proves also that $\,[y,\overline{z}](b)=0\,$ thus,
$$\langle\,\ell_{\beta}y\,,\,z\,\rangle=\langle\, y\,,\,\ell_{\beta}z\,\rangle\,.$$
\end{proof}
\begin{rem}
As a consequence of (\ref{beta-Lagrange-Identity-2}), under the boundary conditions
(\ref{Boundary-Conditions}), of course we also have
\begin{equation*}\label{beta-Lagrange-Identity-3}
\displaystyle\,\langle -D_{\beta}\beta^{-1}D_{\beta^{-1}}D_{\beta}y\,,\,z\rangle=
\langle y,-D_{\beta}\beta^{-1}D_{\beta^{-1}}D_{\beta}z\rangle\,.
\end{equation*}
\end{rem}

\begin{thm}\label{beta-Lemma3.5}
All eigenvalues of the problem (\ref{ell-beta-Operator})-(\ref{Boundary-Conditions})
are real. Eigenfunctions corresponding to different eigenvalues are orthogonal.
\end{thm}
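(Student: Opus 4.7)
The plan is to exploit self-adjointness of $\ell_{\beta}$ on the subspace of functions satisfying the boundary conditions, as already established in Corollary \ref{Self-Adjoint-Operator}. Both statements follow by the same classical two-line arguments, transcribed to the $\beta$-inner product.

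For the first assertion, let $\lambda$ be an eigenvalue of (\ref{ell-beta-Operator})--(\ref{Boundary-Conditions}) with corresponding nontrivial eigenfunction $y\in L_{\beta}^2(s_0,b)$ satisfying (\ref{Boundary-Conditions}). I would compute $\langle \ell_{\beta}y,y\rangle$ in two ways. On one hand, $\langle \ell_{\beta}y,y\rangle=\langle \lambda y,y\rangle=\lambda\,\|y\|^2$. On the other hand, by Corollary \ref{Self-Adjoint-Operator}, $\langle \ell_{\beta}y,y\rangle=\langle y,\ell_{\beta}y\rangle=\langle y,\lambda y\rangle=\overline{\lambda}\,\|y\|^2$. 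Equating and using $\|y\|^2\neq 0$ gives $\lambda=\overline{\lambda}$, hence $\lambda\in\mathbb{R}$.

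For the second assertion, let $\lambda_1\neq \lambda_2$ be two eigenvalues, with eigenfunctions $y_1,y_2$ both satisfying the boundary conditions (\ref{Boundary-Conditions}). By the first part both $\lambda_1$ and $\lambda_2$ are real, so the standard computation yields
\[
\lambda_1\langle y_1,y_2\rangle=\langle \ell_{\beta}y_1,y_2\rangle=\langle y_1,\ell_{\beta}y_2\rangle=\langle y_1,\lambda_2 y_2\rangle=\overline{\lambda_2}\,\langle y_1,y_2\rangle=\lambda_2\langle y_1,y_2\rangle,
\]
so that $(\lambda_1-\lambda_2)\langle y_1,y_2\rangle=0$. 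Since $\lambda_1\neq\lambda_2$, we conclude $\langle y_1,y_2\rangle=0$, i.e., $y_1\perp y_2$ in $L_{\beta}^2(s_0,b)$.

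There is essentially no real obstacle once the $\beta$-Lagrange identity (Theorem \ref{beta-Lagrange-Identity-Theorem}) and self-adjointness (Corollary \ref{Self-Adjoint-Operator}) have been proved; the only minor point to keep in mind is verifying that the complex conjugate eigenfunction $\overline{y}$ also satisfies the boundary conditions (so that $\ell_{\beta}$ may be applied to it in the inner product), which is immediate because the coefficients $a_1,a_2,b_1,b_2$ and $r(t)$ are all real, and because $D_{\beta^{-1}}$ commutes with conjugation. With that remark in place, the argument is fully justified.
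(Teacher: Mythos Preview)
Your proof is correct and follows essentially the same approach as the paper: both parts are derived directly from the self-adjointness established in Corollary \ref{Self-Adjoint-Operator}, exactly as you do. Your closing remark about $\overline{y}$ satisfying the boundary conditions is actually unnecessary here, since the self-adjointness identity $\langle \ell_{\beta}y,z\rangle=\langle y,\ell_{\beta}z\rangle$ is applied with $z=y$ (or $z=y_2$), not with $z=\overline{y}$; but this does not affect the validity of the argument.
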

\begin{proof}
We separate the proof in two parts: (i) and (ii).

\noindent
(i) First we show that the eigenfunctions are real.

\noindent Let $\,\lambda_0\,$ be an eigenvalue and $\,y_0(t)\,$ be a corresponding
eigenfunction. Since
$$\,\ell_{\beta}\,y_0(t)=\lambda_0\,y_0(t)\,,\quad
\overline{\ell_{\beta}\,y_0(t)}=\overline{\lambda_0\,y_0(t)}$$
and, by Corollary \ref{Self-Adjoint-Operator},
$$\,\int_{s_0}^{b}\ell_{\beta}\,y_0(t)\,\overline{y_0(t)}d_{\beta}t=
\int_{s_0}^{b}y_0(t)\,\overline{\ell_{\beta}\,y_0(t)}d_{\beta}t\,,$$
then,
$$\,\lambda_0\int_{s_0}^{b}\left|y_0(t)\right|^2\,d_{\beta}t=
\overline{\lambda_0}\int_{s_0}^{b}\left|y_0(t)\right|^2\,d_{\beta}t\,,$$
thus
$$
\,\left(\lambda_0-\overline{\lambda_0}\right)\int_{s_0}^{b}\left|y_0(t)\right|^2\,d_{\beta}t=0\,.
$$
Since $\,y_0(t)\,$ is an eigenfunction then $\,\lambda_0=\overline{\lambda_0}\,$.

\vspace{0.5em}
(ii) Finally, we show that eigenfunctions corresponding to
different eigenvalues are orthogonal.

\noindent Let $\,\lambda_1\neq\lambda_2\,$ be distinct eigenvalues with eigenfunctions
$\,\phi_1\,$ and $\,\phi_2\,$, respectively.
From Corollary \ref{Self-Adjoint-Operator} 
and because the eigenvalues are real one have
$$\lambda_1\int_{s_0}^b\phi_1(t)\overline{\phi_2(t)}d_{\beta}t=
\lambda_2\int_{s_0}^b\phi_1(t)\overline{\phi_2(t)}d_{\beta}t\,.$$
Since $\,\lambda_1\neq\lambda_2\,$ the orthogonality follows.
\end{proof}

Now we generalize these results in the following way, where $\,\beta\,$ is a function
satisfying the conditions introduced in the very beginning of subsection \ref{Subsect-2.1}.

If $\,a\leq s_0\leq b\,$ then we may consider the $\,\beta$-SLP in
$\,L_{\beta}^2\left(a,b\right)\,$ defined by the $\,\beta$-difference equation
\begin{equation}\label{ell-beta-Operator-a-b}
\ell_{\beta}^{a,b}y(t):=-D_{\beta}\beta^{-1}D_{\beta^{-1}}D_{\beta}y(t)+r(t)y(t)=
\lambda\,y(t)\,,
\end{equation}
with $\,-\infty<a\leq s_0\leq b<\infty\,$, $\,\lambda\in\mathbb{C}\,$, and the boundary
conditions
\begin{equation}\label{Boundary-Conditions-a-b}
\left\{\begin{array}{l}
\displaystyle a_1y(a)+a_2D_{\beta^{-1}}y\left(a\right)=0 \\ [0.5em]
\displaystyle b_1y(b)+b_2D_{\beta^{-1}}y(b)=0\,.
\end{array}\right.
\end{equation}
We also assume that $\,r(t)\,$ is a real valued continuous function on $\,[a,b]\,$
and $\,\vert a_1\vert+\vert a_2\vert\neq 0\neq \vert b_1\vert+\vert b_2\vert$.

\noindent This operator $\,\ell_{\beta}^{a,b}\,$ defined by
(\ref{ell-beta-Operator-a-b})-(\ref{Boundary-Conditions-a-b}) satisfies the
following Corollaries.
\begin{cor}
If $\,y\,,\,z\,\in L_{\beta}^2\left(a,b\right)\,$ then
\begin{equation*}
\int_{a}^{b}\left[\Big(\ell_{\beta}^{a,b}y(t)\Big)\overline{z(t)}-
y(t)\overline{\Big(\ell_{\beta}^{a,b}z(t)\Big)}\right]d_{\beta}u=
[y,\overline{z}](b)-[y,\overline{z}]\left(a\right)\,,
\end{equation*}
with $\,[y,z]\,$ defined by (\ref{beta-1-Wronskian}).
\end{cor}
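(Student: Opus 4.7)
The plan is to deduce this Corollary from the $\beta$-Lagrange identity already proved (Theorem \ref{beta-Lagrange-Identity-Theorem}) by splitting the $\beta$-integral at the fixed point $s_0$ and exploiting the defining additivity relation (\ref{beta-integral-a-b}). As a pointwise operator, $\ell_{\beta}^{a,b}$ coincides with the operator $\ell_{\beta}$ of (\ref{ell-beta-Operator}); only their natural domains differ. So, writing
$$F(t):=\bigl(\ell_{\beta}^{a,b}y(t)\bigr)\overline{z(t)}-y(t)\overline{\bigl(\ell_{\beta}^{a,b}z(t)\bigr)},$$
I would first use (\ref{beta-integral-a-b}) to split
$$\int_a^b F\,d_\beta \;=\; \int_{s_0}^b F\,d_\beta \;-\; \int_{s_0}^a F\,d_\beta,$$
and apply Theorem \ref{beta-Lagrange-Identity-Theorem} directly to the first piece, getting $[y,\overline{z}](b)-[y,\overline{z}](s_0)$.

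For the second piece, since $a\le s_0$, I would invoke the remark made just before Lemma \ref{beta-Lemma3.2} to the effect that Theorem \ref{beta-Lagrange-Identity-Theorem} (and the underlying Lemma \ref{beta-Lemma3.2}) continues to hold with $(s_0,b)$ replaced by $(a,s_0)$ when the non-$s_0$ endpoint lies to the left of the fixed point. Coupled with the antisymmetry $\int_{s_0}^a=-\int_a^{s_0}$, immediate from (\ref{beta-integral-a-b}), this yields
$$\int_{s_0}^a F\,d_\beta \;=\; -\Bigl([y,\overline{z}](s_0)-[y,\overline{z}](a)\Bigr) \;=\; [y,\overline{z}](a)-[y,\overline{z}](s_0).$$
Subtracting the two expressions, the boundary contributions at $s_0$ cancel and the Corollary follows.

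The only real obstacle is conceptual rather than computational: one must be comfortable that the derivation of Theorem \ref{beta-Lagrange-Identity-Theorem} is symmetric with respect to the fixed point, so that applying it on $(a,s_0)$ with $a\le s_0$ is legitimate and produces the analogous boundary expression $[y,\overline{z}](s_0)-[y,\overline{z}](a)$. A completely self-contained alternative would be to mimic the proof of Theorem \ref{beta-Lagrange-Identity-Theorem} line by line, replacing Lemma \ref{beta-Lemma3.2} by its $(a,b)$-version, Corollary \ref{beta-Lemma3.2-a-b}; the missing analogue of Lemma \ref{beta-Lemma3.2}(iii) on $(a,b)$ is obtained from part (ii) of Corollary \ref{beta-Lemma3.2-a-b} by exchanging the roles of $f$ and $g$ and using the conjugate symmetry $\langle u,v\rangle=\overline{\langle v,u\rangle}$ of the $\beta$-inner product (\ref{beta-inner-product}).
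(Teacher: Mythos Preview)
Your proposal is correct. Your primary argument---splitting $\int_a^b$ at $s_0$ via (\ref{beta-integral-a-b}) and applying Theorem \ref{beta-Lagrange-Identity-Theorem} separately on each half so that the boundary terms at $s_0$ cancel---is a legitimate route, and you correctly flag the one point that needs care: the version of Theorem \ref{beta-Lagrange-Identity-Theorem} on an interval with the non-$s_0$ endpoint to the \emph{left} of the fixed point, which is covered by the remark preceding Lemma \ref{beta-Lemma3.2}.

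The paper, however, does not argue this way. Its proof is precisely your ``self-contained alternative'': rerun the proof of Theorem \ref{beta-Lagrange-Identity-Theorem} verbatim, substituting Corollary \ref{beta-Lemma3.2-a-b} for Lemma \ref{beta-Lemma3.2}. What this buys is that no appeal to the left-of-$s_0$ symmetry remark is needed and no boundary cancellation at $s_0$ has to be checked; the endpoints $a$ and $b$ appear directly from Corollary \ref{beta-Lemma3.2-a-b}(ii). Your splitting argument is slightly more economical in that it recycles the theorem wholesale rather than its proof, but it leans on the symmetry remark, whereas the paper's route is entirely self-contained once Corollary \ref{beta-Lemma3.2-a-b} is in hand.
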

\begin{proof}
To prove this Corollary one follows the procedure of the proof of Theorem
\ref{beta-Lagrange-Identity-Theorem} and make use of Corollary \ref{beta-Lemma3.2-a-b}.
\end{proof}
\begin{cor}
The $\,\beta$-Sturm Liouville eigenvalue problem
(\ref{ell-beta-Operator-a-b}-\ref{Boundary-Conditions-a-b}) is self-adjoint in
$L_{\beta}^2\left(a,b\right)$
\big(i.e., $\ell_{\beta}^{a,b}$ is self-adj. in
$\big\{y\in L_{\beta}^2\left(a,b\right)\!:\,y
\mbox{\,satisfies\,(\ref{Boundary-Conditions-a-b})}\big\}$\big).
\end{cor}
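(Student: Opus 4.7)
The plan is to adapt the proof of Corollary \ref{Self-Adjoint-Operator} almost verbatim, with $s_0$ replaced by the endpoint $a$ and Theorem \ref{beta-Lagrange-Identity-Theorem} replaced by the preceding Corollary (the $\beta$-Lagrange identity over $(a,b)$). More precisely, I would fix $y,z$ in the prescribed domain, i.e., $y,z\in L_\beta^2(a,b)$ satisfying the boundary conditions (\ref{Boundary-Conditions-a-b}); the preceding Corollary gives
\[
\langle \ell_\beta^{a,b} y,z\rangle-\langle y,\ell_\beta^{a,b} z\rangle
= [y,\overline{z}](b)-[y,\overline{z}](a),
\]
so the whole task reduces to showing that the Wronskian-type bracket $[y,\overline{z}]$ defined by (\ref{beta-1-Wronskian}) vanishes at both endpoints.

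At the endpoint $a$ I would argue by cases, exactly as in Corollary \ref{Self-Adjoint-Operator}. If $a_2\neq 0$, the first of the boundary conditions (\ref{Boundary-Conditions-a-b}) applied to $y$ and to $z$ yields $D_{\beta^{-1}}y(a)=-(a_1/a_2)\,y(a)$ and $D_{\beta^{-1}}z(a)=-(a_1/a_2)\,z(a)$; substituting into (\ref{beta-1-Wronskian}) the two terms cancel (using that the boundary coefficients are real, so the conjugation does not affect the ratio $a_1/a_2$), giving $[y,\overline{z}](a)=0$. If $a_2=0$, then the assumption $|a_1|+|a_2|\neq 0$ forces $a_1\neq 0$, hence the boundary condition collapses to $y(a)=0$ and, by the same reasoning applied to $z$, $z(a)=0$; then $[y,\overline{z}](a)=0$ trivially. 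An identical two-case analysis using $b_1,b_2$ at the endpoint $b$ shows $[y,\overline{z}](b)=0$. Combining, $\langle\ell_\beta^{a,b}y,z\rangle=\langle y,\ell_\beta^{a,b}z\rangle$ on the prescribed domain, which is exactly the required self-adjointness.

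I do not expect any genuine obstacle here: the non-trivial work was to extend the $\beta$-Lagrange identity from $(s_0,b)$ to the general interval $(a,b)$ with $a\le s_0\le b$, and this has already been done in the preceding Corollary via Corollary \ref{beta-Lemma3.2-a-b}. The boundary-bracket computation is then a verbatim repetition of the argument carried out at $s_0$ (respectively $b$) in Corollary \ref{Self-Adjoint-Operator}, with no additional input required.
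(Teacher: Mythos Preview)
Your proposal is correct and is exactly the argument the paper has in mind: the paper states this corollary without proof immediately after the $(a,b)$ version of the $\beta$-Lagrange identity, implicitly relying on the verbatim adaptation of the proof of Corollary~\ref{Self-Adjoint-Operator} that you have spelled out.
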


\begin{cor}
All eigenvalues of the problem (\ref{ell-beta-Operator-a-b})-(\ref{Boundary-Conditions-a-b})
are real. Eigenfunctions corresponding to different eigenvalues are orthogonal.
\end{cor}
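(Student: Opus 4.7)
The plan is to mimic the proof of Theorem \ref{beta-Lemma3.5} almost verbatim, the only substantive change being that the lower endpoint $s_0$ is replaced by $a$ and we now invoke the self-adjointness of $\ell_{\beta}^{a,b}$ provided by the immediately preceding corollary instead of that of $\ell_\beta$. Since both claims (reality of eigenvalues and orthogonality of eigenfunctions corresponding to distinct eigenvalues) are standard consequences of self-adjointness of a linear operator on a Hilbert space, once we have $\langle \ell_\beta^{a,b} y, z\rangle = \langle y, \ell_\beta^{a,b} z\rangle$ for all $y,z$ satisfying (\ref{Boundary-Conditions-a-b}), everything else is routine.

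First I would treat reality. Let $\lambda_0\in\mathbb{C}$ be an eigenvalue with eigenfunction $y_0\in L^2_\beta(a,b)$ satisfying the boundary conditions (\ref{Boundary-Conditions-a-b}). Applying the self-adjointness of $\ell_\beta^{a,b}$ to the pair $(y_0,y_0)$ gives
\[
\lambda_0 \int_a^b |y_0(t)|^2\,d_\beta t
= \langle \ell_\beta^{a,b} y_0, y_0\rangle
= \langle y_0, \ell_\beta^{a,b} y_0\rangle
= \overline{\lambda_0}\int_a^b |y_0(t)|^2\,d_\beta t,
\]
so $(\lambda_0-\overline{\lambda_0})\|y_0\|^2_{L^2_\beta(a,b)}=0$. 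Since $y_0$ is an eigenfunction its norm is nonzero, hence $\lambda_0=\overline{\lambda_0}\in\mathbb{R}$.

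For orthogonality, let $\lambda_1\neq\lambda_2$ be eigenvalues with eigenfunctions $\phi_1,\phi_2$. Both eigenvalues are real by the first part, so $\overline{\lambda_2}=\lambda_2$, and the self-adjointness of $\ell_\beta^{a,b}$ gives
\[
\lambda_1 \langle \phi_1,\phi_2\rangle
= \langle \ell_\beta^{a,b}\phi_1,\phi_2\rangle
= \langle \phi_1,\ell_\beta^{a,b}\phi_2\rangle
= \overline{\lambda_2}\langle \phi_1,\phi_2\rangle
= \lambda_2\langle \phi_1,\phi_2\rangle,
\]
so $(\lambda_1-\lambda_2)\langle \phi_1,\phi_2\rangle=0$, and since $\lambda_1\neq\lambda_2$ we conclude $\langle \phi_1,\phi_2\rangle=0$.

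There is essentially no obstacle: the only thing worth double-checking is that the eigenfunctions indeed lie in the domain used in the preceding corollary (i.e.\ they satisfy (\ref{Boundary-Conditions-a-b})), which is immediate from the definition of an eigenfunction of the boundary value problem (\ref{ell-beta-Operator-a-b})--(\ref{Boundary-Conditions-a-b}). Hence I would simply write a short proof of the form \emph{``By the previous corollary, $\ell_\beta^{a,b}$ is self-adjoint on the set of functions satisfying (\ref{Boundary-Conditions-a-b}); repeating the argument of Theorem~\ref{beta-Lemma3.5} with $s_0$ replaced by $a$ yields the result.''}
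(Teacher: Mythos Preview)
Your proposal is correct and is exactly the argument the paper has in mind: the paper states this corollary without proof, treating it as an immediate consequence of the preceding self-adjointness corollary together with the argument of Theorem~\ref{beta-Lemma3.5} with $s_0$ replaced by $a$. Your written-out version is precisely that.
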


\subsection{A particular case}\label{particular-case}
The generality of the $\,\beta$-SLP (\ref{ell-beta-Operator})-(\ref{Boundary-Conditions})
or (\ref{ell-beta-Operator-a-b})-(\ref{Boundary-Conditions-a-b})
implies the difficulty on finding explicit non-trivial solutions. The aim of this subsection is
to consider some particular cases, although very comprehensive, in such a way that enables one
to exhibit solutions of equation (\ref{ell-beta-Operator}) or (\ref{ell-beta-Operator-a-b}).

In the following, $\,I\subseteq\mathbb{R}\,$ is an interval and the function
$\,\beta:\,I\rightarrow\,I\,$ satisfies the conditions introduced in subsection
\ref{Subsect-2.1}. Furthermore, the function $\,p\,$ satisfies the same assumptions
assumed in the beginning of subsection \ref{beta-exponential-trigonometric}.
We have the following Proposition relative to the $\,\beta$-exponential and
$\,\beta$-trigonometric functions (\ref{beta-exp})-(\ref{beta-Trigonometric}).
\begin{prop}
The following identities hold on $\,I\,$:
\begin{equation*}
\begin{array}{l}
D_{\beta^{-1}}\,\mbox{{\upshape e}}_{p,\beta}\,(t)=p\big(\beta^{-1}(t)\big)\,\mbox{\upshape e}_{p,\beta}\,(t)\,,\;\:
D_{\beta^{-1}}\,\mbox{\upshape E}_{p,\beta}\,(t)=p\big(\beta^{-1}(t)\big)\,\mbox{\upshape E}_{p,\beta}\big(\beta(t)\big)\,, \\ [0.8em]
D_{\beta^{-1}}\,\mbox{\upshape sin}_{p,\beta}\,(t)=p\big(\beta^{-1}(t)\big)\,\mbox{\upshape cos}_{p,\beta}\,(t)\,,\;\:
D_{\beta^{-1}}\,\mbox{\upshape cos}_{p,\beta}\,(t)=-p\big(\beta^{-1}(t)\big)\,\mbox{\upshape sin}_{p,\beta}\,(t)\,, \\ [0.8em]
D_{\beta^{-1}}\,\mbox{\upshape Sin}_{p,\beta}\,(t)=p\big(\beta^{-1}(t)\big)\,\mbox{\upshape Cos}_{p,\beta}\,(t)\,, \\ [0.8em]
D_{\beta^{-1}}\,\mbox{\upshape Cos}_{p,\beta}\,(t)=-p\big(\beta^{-1}(t)\big)\,\mbox{\upshape Sin}_{p,\beta}\,(t)\,.
\end{array}
\end{equation*}
\end{prop}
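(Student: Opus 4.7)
The plan is to derive each of the six identities as a one-line consequence of part (i) of Lemma \ref{beta-Lemma3.2}, namely
\[
(D_{\beta^{-1}} f)(t) = (D_{\beta} f)\big(\beta^{-1}(t)\big),
\]
applied to the $D_{\beta}$-derivative formulas already recorded in (\ref{trigonometric-beta-derivatives}). Under the standing hypotheses on $p$ and $\beta$, each of $\mbox{e}_{p,\beta}$, $\mbox{E}_{p,\beta}$, $\mbox{sin}_{p,\beta}$, $\mbox{cos}_{p,\beta}$, $\mbox{Sin}_{p,\beta}$, $\mbox{Cos}_{p,\beta}$ is $\beta$-differentiable at every point of $I$, so identity (i) is directly applicable.

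First I would treat the two exponentials. Applying (i) to $f=\mbox{e}_{p,\beta}$ and evaluating the formula $D_{\beta}\mbox{e}_{p,\beta}(s)=p(s)\,\mbox{e}_{p,\beta}(s)$ at the point $s=\beta^{-1}(t)$ gives the first claim in one step. The second, for $f=\mbox{E}_{p,\beta}$, is obtained identically from $D_{\beta}\mbox{E}_{p,\beta}(s)=p(s)\,\mbox{E}_{p,\beta}(\beta(s))$ evaluated at $s=\beta^{-1}(t)$; the only piece of algebra beyond Lemma \ref{beta-Lemma3.2}(i) is to handle the composition $\beta(\beta^{-1}(t))$ so that the argument of $\mbox{E}_{p,\beta}$ on the right-hand side comes out in the required form.

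The four trigonometric identities I would then obtain by $\mathbb{C}$-linearity of $D_{\beta^{-1}}$, which is inherited from the linearity of $D_{\beta}$ through identity (i). Since, by definitions (\ref{beta-trigonometric})--(\ref{beta-Trigonometric}), $\mbox{sin}_{p,\beta}$, $\mbox{cos}_{p,\beta}$ and $\mbox{Sin}_{p,\beta}$, $\mbox{Cos}_{p,\beta}$ are signed half-combinations of $\mbox{e}_{\pm ip,\beta}$ and $\mbox{E}_{\pm ip,\beta}$ respectively, applying the two exponential identities with $p$ replaced by $ip$ and by $-ip$, and then taking the appropriate sum or difference, yields the four trigonometric identities mechanically.

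No serious obstacle is expected: the whole argument is bookkeeping built entirely from ingredients already proved (Lemma \ref{beta-Lemma3.2}(i), formulas (\ref{trigonometric-beta-derivatives}), and the linearity of $D_{\beta}$). The only point demanding care is to keep the arguments $t$, $\beta(t)$ and $\beta^{-1}(t)$ aligned on the two sides of each equality, so that the factor $p(\cdot)$ in (\ref{trigonometric-beta-derivatives}) is correctly replaced by $p(\beta^{-1}(\cdot))$ while the shift structure of the second argument (present for the capital-letter variants, absent for the lower-case ones) is reproduced exactly as in the statement.
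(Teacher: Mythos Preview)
Your route via Lemma \ref{beta-Lemma3.2}(i) is different from the paper's, which works by direct manipulation of the infinite product defining $\mbox{e}_{p,\beta}$. Your strategy is cleaner in principle, but there is a concrete problem when you execute it.

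Carry out your first step explicitly: from $D_\beta\,\mbox{e}_{p,\beta}(s)=p(s)\,\mbox{e}_{p,\beta}(s)$ and Lemma \ref{beta-Lemma3.2}(i) you get
\[
D_{\beta^{-1}}\mbox{e}_{p,\beta}(t)=\bigl(D_\beta\,\mbox{e}_{p,\beta}\bigr)\!\left(\beta^{-1}(t)\right)=p\!\left(\beta^{-1}(t)\right)\mbox{e}_{p,\beta}\!\left(\beta^{-1}(t)\right),
\]
not $p(\beta^{-1}(t))\,\mbox{e}_{p,\beta}(t)$ as stated in the proposition. The same mismatch occurs for $\mbox{E}_{p,\beta}$: the composition you say you will ``handle'' is $\beta(\beta^{-1}(t))=t$, so your method yields $p(\beta^{-1}(t))\,\mbox{E}_{p,\beta}(t)$, not $p(\beta^{-1}(t))\,\mbox{E}_{p,\beta}(\beta(t))$. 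The four trigonometric identities inherit the same shift discrepancy. Hence the plan, as written, does not establish the identities recorded in the statement; the ``one step'' you promise does not land on the right-hand sides claimed.

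In fact this mismatch is diagnostic: if you redo the paper's own direct computation carefully you will see that in the passage from the penultimate line to the last line the factor $\bigl[1-p(\beta^{-1}(t))(\beta^{-1}(t)-t)\bigr]^{-1}$ is silently dropped, and restoring it gives exactly the $\mbox{e}_{p,\beta}(\beta^{-1}(t))$ that your method produces. So your approach is sound as a proof technique, but it proves corrected versions of the six identities (with the function arguments on the right shifted by one application of $\beta^{-1}$ relative to what is printed), not the ones stated. You should flag this rather than assert that the substitution reproduces the displayed right-hand sides.
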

\begin{proof}
We prove the first identity.
\begin{equation*}
\begin{array}{l}
D_{\beta^{-1}}\,\mbox{e}_{p,\beta}(t)\,=\,\displaystyle
\frac{\mbox{e}_{p,\beta}(t)-\mbox{e}_{p,\beta}\big(\beta^{-1}(t)\big)}{t-\beta^{-1}(t)}= \\ [1.8em]
=\displaystyle\frac{1-\frac{1}{1-p\left(\beta^{-1}(t)\right)\Big(\beta^{-1}(t)-t\Big)}}{
\big(t-\beta^{-1}(t)\big)\displaystyle\prod_{k=0}^{\infty}
\left[1-p\left(\beta^k(t)\right)\Big(\beta^k(t)-\beta^{k+1}(t)\Big)\right]} \\ [3em]
=\displaystyle-\frac{p\left(\beta^{-1}(t)\right)\Big(\beta^{-1}(t)-t\Big)}{
t-\beta^{-1}(t)}\,\mbox{e}_{p,\beta}(t)=p\left(\beta^{-1}(t)\right)\mbox{e}_{p,\beta}(t)\,.
\end{array}
\end{equation*}
\end{proof}
In a similar way to the $\,\beta$-product rule (\ref{beta-product-rule}) we have
\begin{equation}\label{beta-1-product-rule}
\begin{array}{lll}
D_{\beta^{-1}}[f\cdot g](t) & = &
D_{\beta^{-1}}[f](t)\cdot g(t)+f\big(\beta^{-1}(t)\big)\cdot D_{\beta^{-1}}[g](t) \\ [0.8em]
 & = & D_{\beta^{-1}}[g](t)\cdot f(t)+g\big(\beta^{-1}(t)\big)\cdot D_{\beta^{-1}}[f](t)\,.
 \end{array}
\end{equation}
Using the corresponding definitions and this latter formula we obtain the following properties.
\begin{prop}
The following identities hold on $\,I\,$:
\begin{equation*}
\begin{array}{l}
D_{\beta^{-1}}D_{\beta}\,\mbox{\upshape e}_{p,\beta}(t)=\left[p^2\big(\beta^{-1}(t)\big)+
D_{\beta^{-1}}p(t)\right]\mbox{\upshape e}_{p,\beta}(t)\,, \\ [0.8em]
D_{\beta^{-1}}D_{\beta}\,\mbox{\upshape E}_{p,\beta}(t)=
p(t)p\big(\beta^{-1}(t)\big)\mbox{\upshape E}_{p,\beta}(t)+D_{\beta^{-1}}p(t)\,\mbox{\upshape E}_{p,\beta}\big(\beta(t)\big)\,, \\ [0.8em]
D_{\beta^{-1}}D_{\beta}\,\mbox{\upshape sin}_{p,\beta}(t)=
-p^2\big(\beta^{-1}(t)\big)\,\mbox{\upshape sin}_{p,\beta}(t)+
D_{\beta^{-1}}p(t)\,\mbox{\upshape cos}_{p,\beta}(t)\,, \\ [0.8em]
D_{\beta^{-1}}D_{\beta}\,\mbox{\upshape cos}_{p,\beta}(t)=
-p^2\big(\beta^{-1}(t)\big)\,\mbox{\upshape cos}_{p,\beta}(t)-
D_{\beta^{-1}}p(t)\,\mbox{\upshape sin}_{p,\beta}(t)\,, \\ [0.8em]
D_{\beta^{-1}}D_{\beta}\,\mbox{\upshape Sin}_{p,\beta}(t)=
-p^2\big(\beta^{-1}(t)\big)\,\mbox{\upshape Sin}_{p,\beta}(t)+
D_{\beta^{-1}}p(t)\,\mbox{\upshape Cos}_{p,\beta}(t)\,, \\ [0.8em]
D_{\beta^{-1}}D_{\beta}\,\mbox{\upshape Cos}_{p,\beta}(t)=
-p^2\big(\beta^{-1}(t)\big)\,\mbox{\upshape Cos}_{p,\beta}(t)-
D_{\beta^{-1}}p(t)\,\mbox{\upshape Sin}_{p,\beta}(t)\,.
\end{array}
\end{equation*}
\end{prop}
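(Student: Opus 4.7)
The overall plan is to derive each identity by first invoking the first-order formulas from (\ref{trigonometric-beta-derivatives}) to rewrite $D_{\beta}$ of the given function as an ordinary product, then applying $D_{\beta^{-1}}$ to that product via the $\beta^{-1}$-product rule (\ref{beta-1-product-rule}), and finally using the preceding Proposition to evaluate the $D_{\beta^{-1}}$ derivatives of the special functions that appear. The six identities naturally split into two groups: the lowercase family ($\mbox{e}_{p,\beta}$, $\mbox{sin}_{p,\beta}$, $\mbox{cos}_{p,\beta}$), whose first $\beta$-derivatives are plain products $p(t)\cdot f(t)$, and the uppercase family ($\mbox{E}_{p,\beta}$, $\mbox{Sin}_{p,\beta}$, $\mbox{Cos}_{p,\beta}$), whose first $\beta$-derivatives involve the $\beta$-shifted factor $f\bigl(\beta(t)\bigr)$.

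For the lowercase family I would proceed as follows, illustrating with $\mbox{e}_{p,\beta}$: starting from $D_{\beta}D_{\beta^{-1}}$, write
$D_{\beta^{-1}}D_{\beta}\,\mbox{e}_{p,\beta}(t)=D_{\beta^{-1}}\bigl[p(t)\,\mbox{e}_{p,\beta}(t)\bigr]$,
apply (\ref{beta-1-product-rule}) in the form $D_{\beta^{-1}}[fg]=D_{\beta^{-1}}f\cdot g+(f\circ\beta^{-1})\cdot D_{\beta^{-1}}g$ with $f=p$, $g=\mbox{e}_{p,\beta}$, and then substitute $D_{\beta^{-1}}\mbox{e}_{p,\beta}(t)=p(\beta^{-1}(t))\,\mbox{e}_{p,\beta}(t)$ from the preceding Proposition. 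The two terms collapse to $\bigl[p^{2}(\beta^{-1}(t))+D_{\beta^{-1}}p(t)\bigr]\mbox{e}_{p,\beta}(t)$. The cases of $\mbox{sin}_{p,\beta}$ and $\mbox{cos}_{p,\beta}$ are handled in exactly the same way using the corresponding $D_{\beta^{-1}}$ formulas of the preceding Proposition; alternatively, one may simply insert the defining expressions (\ref{beta-trigonometric}) in terms of $\mbox{e}_{\pm ip,\beta}$, invoke linearity of $D_{\beta^{-1}}$, and reduce to the exponential identity already obtained.

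The uppercase family requires the same skeleton but is genuinely more delicate because $D_{\beta}\,\mbox{E}_{p,\beta}(t)=p(t)\,\mbox{E}_{p,\beta}(\beta(t))$ carries the shift. Applying (\ref{beta-1-product-rule}) with $f=p$ and $g=\mbox{E}_{p,\beta}\circ\beta$ produces
$D_{\beta^{-1}}p(t)\cdot\mbox{E}_{p,\beta}(\beta(t))+p(\beta^{-1}(t))\cdot D_{\beta^{-1}}\bigl[\mbox{E}_{p,\beta}\circ\beta\bigr](t)$,
and the first summand is already the desired $D_{\beta^{-1}}p(t)\,\mbox{E}_{p,\beta}(\beta(t))$ term. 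Everything is therefore reduced to identifying $D_{\beta^{-1}}\bigl[\mbox{E}_{p,\beta}\circ\beta\bigr](t)=p(t)\,\mbox{E}_{p,\beta}(t)$, which I would obtain by a short direct computation from the infinite-product definition (\ref{beta-Exp}) mimicking the proof of $D_{\beta^{-1}}\mbox{E}_{p,\beta}$ in the preceding Proposition (telescoping one factor of the product and using $\beta(\beta^{-1}(t))=t$). Once this lemma is in place, the claimed formula for $\mbox{E}_{p,\beta}$ is immediate, and the $\mbox{Sin}_{p,\beta}$ and $\mbox{Cos}_{p,\beta}$ identities follow either by substituting into (\ref{beta-Trigonometric}) and using linearity, or by repeating the argument with the appropriate first-derivative identity from (\ref{trigonometric-beta-derivatives}).

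The main obstacle, as indicated, is the uppercase case: one must correctly handle the $\beta$-shift in $\mbox{E}_{p,\beta}(\beta(t))$ both when choosing which form of (\ref{beta-1-product-rule}) to apply (only the first form places $\mbox{E}_{p,\beta}(\beta(t))$ rather than $\mbox{E}_{p,\beta}(t)$ alongside the $D_{\beta^{-1}}p$ factor, matching the stated identity) and when computing $D_{\beta^{-1}}$ of the composition $\mbox{E}_{p,\beta}\circ\beta$, which is not itself covered by the preceding Proposition and must be established separately from the definition.
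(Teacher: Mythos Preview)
Your overall plan---apply (\ref{trigonometric-beta-derivatives}), then the $\beta^{-1}$-product rule (\ref{beta-1-product-rule}), then the first-order $D_{\beta^{-1}}$ identities of the preceding Proposition---is exactly the paper's strategy; the paper only exhibits the $\mbox{E}_{p,\beta}$ computation and declares the rest ``straightforward''. Your treatment of the lowercase family is correct and is what is intended.

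The proposal breaks down at the one step you yourself flag as delicate: the auxiliary identity $D_{\beta^{-1}}\bigl[\mbox{E}_{p,\beta}\circ\beta\bigr](t)=p(t)\,\mbox{E}_{p,\beta}(t)$ that you propose to prove from the product (\ref{beta-Exp}) is false. Telescoping one factor of (\ref{beta-Exp}) exactly as you suggest gives $\mbox{E}_{p,\beta}(t)-\mbox{E}_{p,\beta}(\beta(t))=p(t)\bigl(t-\beta(t)\bigr)\mbox{E}_{p,\beta}(\beta(t))$, hence
\[
D_{\beta^{-1}}\bigl[\mbox{E}_{p,\beta}\circ\beta\bigr](t)
=\frac{\mbox{E}_{p,\beta}(t)-\mbox{E}_{p,\beta}(\beta(t))}{\beta^{-1}(t)-t}
=\frac{p(t)\bigl(t-\beta(t)\bigr)}{\beta^{-1}(t)-t}\,\mbox{E}_{p,\beta}(\beta(t)),
\]
which for general $\beta$ is not $p(t)\,\mbox{E}_{p,\beta}(t)$ (try $\beta(t)=qt$, $p\equiv z$: the left side is $qz\,\mbox{E}_{z,\beta}(qt)$, not $z\,\mbox{E}_{z,\beta}(t)$). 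The paper does not attempt this direct computation at all: after the product rule it instead invokes Lemma~\ref{beta-Lemma3.2}(i), read in the form $(D_{\beta^{-1}}f)\bigl(\beta(t)\bigr)=(D_{\beta}f)(t)$, to rewrite the shifted term as $D_{\beta}\mbox{E}_{p,\beta}(t)$ and then substitutes from (\ref{trigonometric-beta-derivatives}). So the ingredient missing from your argument is precisely Lemma~\ref{beta-Lemma3.2}(i); the replacement you propose for it does not hold, and with it the uppercase half of your plan would not close.
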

\begin{proof}
The proofs are straightforward.
Just to illustrate it we give the proof of the second identity.

\noindent By (\ref{trigonometric-beta-derivatives}) and (\ref{beta-1-product-rule}) we obtain
\begin{equation*}
\begin{array}{lll}
D_{\beta^{-1}}D_{\beta}\mbox{E}_{p,\beta}(t)\,& = &\,
D_{\beta^{-1}}\Big[p(t)\mbox{E}_{p,\beta}\big(\beta(t)\big)\Big] \\ [0.8em]
 & = & D_{\beta^{-1}}p(t)\mbox{E}_{p,\beta}\big(\beta(t)\big)+
 p\big(\beta^{-1}(t)\big)D_{\beta^{-1}}\mbox{E}_{p,\beta}\big(\beta(t)\big)\,,
 \end{array}
\end{equation*}
hence, by (i) of Lemma \ref{beta-Lemma3.2},
\begin{equation*}
\begin{array}{lll}
D_{\beta^{-1}}D_{\beta}\mbox{E}_{p,\beta}(t) & = &
D_{\beta^{-1}}p(t)\mbox{E}_{p,\beta}\big(\beta(t)\big)+
 p\big(\beta^{-1}(t)\big)D_{\beta}\mbox{E}_{p,\beta}(t) \\ [1em]
& = & p(t)p\big(\beta^{-1}(t)\big)\mbox{E}_{p,\beta}(t)+
D_{\beta^{-1}}p(t)\mbox{E}_{p,\beta}\big(\beta(t)\big)\,.
 \end{array}
\end{equation*}
\end{proof}
For the particular case where $\,p\,$ is the constant function $\,p(t)=z\,$,
the following Corollaries hold.
\begin{cor}
If $\,p\,$ is the constant function $\,p(t)=z\,$ on $\,I\,$ then the following identities
hold:
\begin{equation*}
\begin{array}{l}
D_{\beta^{-1}}D_{\beta}\,\mbox{\upshape e}_{z,\beta}(t)=z^2\,\mbox{\upshape e}_{z,\beta}(t) \,,\quad
D_{\beta^{-1}}D_{\beta}\,\mbox{\upshape E}_{z,\beta}(t)=z^2\,\mbox{\upshape E}_{z,\beta}(t) \,, \\ [0.8em]
D_{\beta^{-1}}D_{\beta}\,\mbox{\upshape sin}_{z,\beta}(t)=-z^2\,\mbox{\upshape sin}_{z,\beta}(t) \,,\quad
D_{\beta^{-1}}D_{\beta}\,\mbox{\upshape cos}_{z,\beta}(t)=-z^2\,\mbox{\upshape cos}_{z,\beta}(t) \,, \\ [0.8em]
D_{\beta^{-1}}D_{\beta}\,\mbox{\upshape Sin}_{z,\beta}(t)=-z^2\,\mbox{\upshape Sin}_{z,\beta}(t) \,,\quad
D_{\beta^{-1}}D_{\beta}\,\mbox{\upshape Cos}_{z,\beta}(t)=-z^2\,\mbox{\upshape Cos}_{z,\beta}(t)\,.
\end{array}
\end{equation*}
\end{cor}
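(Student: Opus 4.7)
The plan is to obtain this Corollary as an immediate specialization of the preceding Proposition by taking the function $p$ to be the constant function $p(t)=z$ on $I$. Two observations drive everything. First, since $p$ is constant, $p\big(\beta^{-1}(t)\big)=z$ for every $t\in I$. Second, the $\beta^{-1}$-difference operator applied to a constant function vanishes identically, so $D_{\beta^{-1}}p(t)=0$ throughout $I$.

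With these two facts in hand, each of the six identities in the preceding Proposition collapses to its claimed form. For the first identity, substituting into
\[
D_{\beta^{-1}}D_{\beta}\,\mbox{e}_{p,\beta}(t)=\left[p^2\big(\beta^{-1}(t)\big)+D_{\beta^{-1}}p(t)\right]\mbox{e}_{p,\beta}(t)
\]
yields $[z^2+0]\,\mbox{e}_{z,\beta}(t)=z^2\,\mbox{e}_{z,\beta}(t)$. For the second identity, the term $D_{\beta^{-1}}p(t)\,\mbox{E}_{p,\beta}\big(\beta(t)\big)$ drops out, while $p(t)p\big(\beta^{-1}(t)\big)$ simplifies to $z^2$, producing $D_{\beta^{-1}}D_{\beta}\,\mbox{E}_{z,\beta}(t)=z^2\,\mbox{E}_{z,\beta}(t)$. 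The four trigonometric identities follow by the same mechanism: the $-p^2\big(\beta^{-1}(t)\big)$ factor becomes $-z^2$, and the auxiliary summand containing $D_{\beta^{-1}}p(t)$ disappears.

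No genuine obstacle arises; the Corollary is literally the $p\equiv z$ specialization of the previous Proposition, and a one-line argument suffices for each case. In the write-up I would simply state the two reductions ($p\big(\beta^{-1}(t)\big)=z$ and $D_{\beta^{-1}}p\equiv 0$), perform the substitution in the $\mbox{e}_{p,\beta}$ identity explicitly as a template, and remark that the remaining five identities follow verbatim.
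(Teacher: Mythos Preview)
Your proposal is correct and matches the paper's approach exactly: the paper presents this Corollary immediately after the preceding Proposition, introduced by the sentence ``For the particular case where $p$ is the constant function $p(t)=z$, the following Corollaries hold,'' and gives no further proof. Your two reductions ($p\big(\beta^{-1}(t)\big)=z$ and $D_{\beta^{-1}}p\equiv 0$) are precisely what is implicit in that specialization.
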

\begin{cor}\label{beta-beta-1}
Let $\,p\,$ be the constant function $\,p(t)=z\,$ on $\,I\,$.
If the function $\,\beta\,$ satisfies $\,D_{\beta}\beta^{-1}(t)=k\,$ where $\,k\,$ is
independent of $\,t\,$ then, the $\,\beta$-exponentials functions
(\ref{beta-exp})-(\ref{beta-Exp}) satisfy (\ref{ell-beta-Operator})
or (\ref{ell-beta-Operator-a-b}) with $\,r(t)=0\,$ in $\,I\,$ and $\,\lambda=kz^2\,$
while each of the $\,\beta$-trigonometric functions
(\ref{beta-trigonometric})-(\ref{beta-Trigonometric}) satisfy (\ref{ell-beta-Operator})
or (\ref{ell-beta-Operator-a-b}) with $\,r(t)=0\,$ in $\,I\,$ and $\,\lambda=-kz^2\,$.
\end{cor}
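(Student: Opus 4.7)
The proof is a direct substitution check using the Corollary immediately preceding the statement. First I would reduce the operator $\ell_\beta$ of (\ref{ell-beta-Operator}): under the hypotheses $r(t)\equiv 0$ and $(D_\beta\beta^{-1})(t)\equiv k$, with $D_\beta\beta^{-1}$ read as the scalar-valued $\beta$-derivative of $\beta^{-1}$ multiplying what follows (consistent with its parsing in the proofs of Lemma \ref{beta-Lemma3.2} and Theorem \ref{beta-Lagrange-Identity-Theorem}), the action of $\ell_\beta$ collapses to a constant multiple of $D_{\beta^{-1}} D_\beta$, acting pointwise on $I$. The same reduction applies verbatim to $\ell_\beta^{a,b}$ of (\ref{ell-beta-Operator-a-b}), since the two operators share the same local formula and differ only in the ambient interval.

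Second, I would invoke the preceding Corollary specialized to $p(t)\equiv z$, which supplies the six eigen-relations: each of $\mbox{e}_{z,\beta}$ and $\mbox{E}_{z,\beta}$ satisfies $D_{\beta^{-1}} D_\beta\,y = z^2\,y$, while each of $\mbox{sin}_{z,\beta}$, $\mbox{cos}_{z,\beta}$, $\mbox{Sin}_{z,\beta}$, $\mbox{Cos}_{z,\beta}$ satisfies $D_{\beta^{-1}} D_\beta\,y = -z^2\,y$. Substituting each of these six candidates into the reduced form of $\ell_\beta$ and matching coefficients produces the announced eigenvalues $\lambda=kz^2$ in the exponential cases and $\lambda=-kz^2$ in the trigonometric cases.

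There is essentially no obstacle beyond careful bookkeeping: the one point that warrants attention is the correct parsing of the composition $D_\beta\beta^{-1}D_{\beta^{-1}}$ together with the sign coming from the outer minus sign in $\ell_\beta$. Once the convention established in Lemma \ref{beta-Lemma3.2} is adopted, the verification is entirely mechanical and occupies only a few lines, with both statements (for (\ref{ell-beta-Operator}) and for (\ref{ell-beta-Operator-a-b})) proved simultaneously by the same pointwise computation.
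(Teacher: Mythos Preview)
Your approach is exactly what the paper intends: the corollary is stated without proof because it is meant to follow immediately from the preceding Corollary by plugging the identities $D_{\beta^{-1}}D_\beta y=\pm z^2 y$ into $\ell_\beta y=-\bigl(D_\beta\beta^{-1}\bigr)\,D_{\beta^{-1}}D_\beta y$ with $r\equiv 0$ and $D_\beta\beta^{-1}\equiv k$. So as a strategy your proposal matches the paper's own reasoning.

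That said, the ``careful bookkeeping'' you flag deserves to be actually carried out, because doing so does \emph{not} reproduce the announced eigenvalues. With the parsing fixed by Lemma~\ref{beta-Lemma3.2} (namely $D_\beta\beta^{-1}$ as the scalar function $(D_\beta\beta^{-1})(t)$ multiplying what follows), one gets
\[
\ell_\beta y=-k\,D_{\beta^{-1}}D_\beta y,
\]
so for the exponentials $D_{\beta^{-1}}D_\beta\,\mathrm{e}_{z,\beta}=z^2\mathrm{e}_{z,\beta}$ yields $\lambda=-kz^{2}$, while for the trigonometric functions $D_{\beta^{-1}}D_\beta\,\mathrm{sin}_{z,\beta}=-z^{2}\mathrm{sin}_{z,\beta}$ yields $\lambda=kz^{2}$. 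The signs in the statement are therefore interchanged; this appears to be a slip in the paper rather than a flaw in your method, but your sentence ``produces the announced eigenvalues'' would fail as written. Everything else in your plan is sound.
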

\begin{rem}
\begin{enumerate}
\item If $\,\beta(t)=qt+\omega\,$, which corresponds to the $(q,\omega)$-derivative
operator (\ref{Hahn-operator}), the condition on the function $\,\beta\,$ of Corollary
\ref{beta-beta-1} is satisfied since $\,D_{\beta}\beta^{-1}(t)=1/q\,$ in $\,I\,$.
\item Notice that, under the conditions of Corollary \ref{beta-beta-1}, the
$\,\beta$-exponentials and $\,\beta$-trigonometric functions, for appropriate choices of
the constants $\,a_1, a_2, b_1, b_2\,$, and $\,a, b\,$ are solutions of both the
$\,\beta$-Sturm Liouville problems (\ref{ell-beta-Operator})-(\ref{Boundary-Conditions}) or
(\ref{ell-beta-Operator-a-b})-(\ref{Boundary-Conditions-a-b}).
\item An interesting feature of this setting would be to obtain an explicit function $\,\beta\,$
under the assumptions of Corollary \ref{beta-beta-1}, but not coincident with the one for the
Hahn operator (\ref{Hahn-operator}), and specific values of $\,a_1, a_2, b_1, b_2\,$, and
$\,a, b\,$ in order to obtain a fundamental set of soluttions of the corresponding
$\,\beta$-SLP. This could pave the way to pursue investigations in other directions.
\end{enumerate}
\end{rem}

\vspace{1em}
\textbf{Final conclusions}.
We established conditions that make possible a self-adjoint formulation of the Sturm-Liouville
problem (\ref{ell-beta-Operator})-(\ref{Boundary-Conditions}) in the space
$\,L_{\beta}^2\left(s_0,b\right)$ and we were able to extend it to the problem
(\ref{ell-beta-Operator-a-b})-(\ref{Boundary-Conditions-a-b}) in
$\,L_{\beta}^2\left(a,b\right)$.
Its construction is based in the general $\beta$-difference quantum operator
(\ref{beta-derivative}).
Then, it was proved that all the corresponding eigenvalues are real and the relative
eigenfunctions are orthogonal.
We believe that the establishment of this frame will allow other researchers to get interested
in it and push further to other directions and results.

\vspace{1em}

\end{document}


\title{Corrigendum}

\vspace{2em}
\centerline{A $\beta-$Sturm Liouville problem associated with the general quantum operator}

\vspace{0.5em}
\centerline{arXiv 2101.04217 v2 [math.CA] 19 May 2021}

\author[Cardoso]{J.L. Cardoso}

%
%
%
%


%
%

\maketitle

The definitions of $\,\mbox{sin}_{p,\beta}(t)\,$ and
$\,\mbox{Sin}_{p,\beta}(t)\,$ in (2.12) and (2.13), page 6, should be
$$\mbox{sin}_{p,\beta}(t)=\frac{\mbox{e}_{ip,\beta}(t)-\mbox{e}_{-ip,\beta}(t)}{2i}\quad,\qquad
\mbox{Sin}_{p,\beta}(t)=\frac{\mbox{E}_{ip,\beta}(t)-\mbox{E}_{-ip,\beta}(t)}{2i}\,.$$

\vspace{0.3em}
Also at page 6, the last two identities of formulae (2.14) must be replaced by
$$D_{\beta}\,\mbox{Sin}_{p,\beta}(t)=p(t)\,\mbox{Cos}_{p,\beta}(\beta(t))\,,\quad
D_{\beta}\,\mbox{Cos}_{p,\beta}(t)=-p(t)\,\mbox{Sin}_{p,\beta}(\beta(t))\,.$$

\vspace{1em}
With respect to Proposition 3.11, page 12, the first four identities aren't correct.
Its full statement is:

\vspace{0.5em}
\noindent
\textbf{Proposition 3.11.}
\emph{The following identities hold on} $\,I\,$:
\begin{equation*}
\begin{array}{l}
D_{\beta^{-1}}\,\mbox{{\upshape e}}_{p,\beta}\,(t)=p\big(\beta^{-1}(t)\big)\,\mbox{\upshape e}_{p,\beta}\,\big(\beta^{-1}(t)\big)\,,\;\:
D_{\beta^{-1}}\,\mbox{\upshape E}_{p,\beta}\,(t)=p\big(\beta^{-1}(t)\big)\,\mbox{\upshape E}_{p,\beta}(t)\,, \\ [0.8em]
D_{\beta^{-1}}\,\mbox{\upshape sin}_{p,\beta}\,(t)=p\big(\beta^{-1}(t)\big)\,\mbox{\upshape cos}_{p,\beta}\,\big(\beta^{-1}(t)\big)\,,\;\,
D_{\beta^{-1}}\,\mbox{\upshape cos}_{p,\beta}\,(t)=-p\big(\beta^{-1}(t)\big)\,\mbox{\upshape sin}_{p,\beta}\,\big(\beta^{-1}(t)\big)\,, \\ [0.8em]
D_{\beta^{-1}}\,\mbox{\upshape Sin}_{p,\beta}\,(t)=p\big(\beta^{-1}(t)\big)\,\mbox{\upshape Cos}_{p,\beta}\,(t)\,, \\ [0.8em]
D_{\beta^{-1}}\,\mbox{\upshape Cos}_{p,\beta}\,(t)=-p\big(\beta^{-1}(t)\big)\,\mbox{\upshape Sin}_{p,\beta}\,(t)\,.
\end{array}
\end{equation*}

\vspace{0.5em}
Also regarding Proposition 3.11, in the last line of the first identity's proof,
one should read:
$$
\,=\,-\frac{p\left(\beta^{-1}(t)\right)\Big(\beta^{-1}(t)-t\Big)}{
t-\beta^{-1}(t)}\,\mbox{e}_{p,\beta}(\beta^{-1}(t))=
p\left(\beta^{-1}(t)\right)\mbox{e}_{p,\beta}(\beta^{-1}(t))\,.
$$

\vspace{1em}
Along with this, one finds a new statement of Proposition 3.12 at page 12.

\vspace{0.5em}
\noindent
\textbf{Proposition 3.12.}
The following identities hold on $\,I\,$:
\begin{equation*}
\begin{array}{l}
D_{\beta^{-1}}D_{\beta}\,\mbox{\upshape e}_{p,\beta}(t)=\left[\,p(t)p\big(\beta^{-1}(t)\big)+
D_{\beta^{-1}}p(t)\right]\mbox{\upshape e}_{p,\beta}(\beta^{-1}(t))\,, \\ [0.8em]
D_{\beta^{-1}}D_{\beta}\,\mbox{\upshape E}_{p,\beta}(t)=
\left[\,p(t)p\big(\beta^{-1}(t)\big)D_{\beta^{-1}}\beta(t)+D_{\beta^{-1}}p(t)\,\right]
\mbox{\upshape E}_{p,\beta}\big(\beta(t)\big)\,, \\ [0.8em]
D_{\beta^{-1}}D_{\beta}\,\mbox{\upshape sin}_{p,\beta}(t)=
-p^2\big(\beta^{-1}(t)\big)\,\mbox{\upshape sin}_{p,\beta}\big(\beta^{-1}(t)\big)+
D_{\beta^{-1}}p(t)\,\mbox{\upshape cos}_{p,\beta}(t)\,, \\ [0.8em]
D_{\beta^{-1}}D_{\beta}\,\mbox{\upshape cos}_{p,\beta}(t)=
-p^2\big(\beta^{-1}(t)\big)\,\mbox{\upshape cos}_{p,\beta}\big(\beta^{-1}(t)\big)-
D_{\beta^{-1}}p(t)\,\mbox{\upshape sin}_{p,\beta}(t)\,, \\
[0.8em]D_{\beta^{-1}}D_{\beta}\,\mbox{\upshape Sin}_{p,\beta}(t)=
-p^2(t)D_{\beta^{-1}}\beta(t)\,\mbox{\upshape Sin}_{p,\beta}\big(\beta(t)\big)+
D_{\beta^{-1}}p(t)\,\mbox{\upshape Cos}_{p,\beta}(t)\,, \\ [0.8em]
D_{\beta^{-1}}D_{\beta}\,\mbox{\upshape Cos}_{p,\beta}(t)=
-p^2(t)D_{\beta^{-1}}\beta(t)\,\mbox{\upshape Cos}_{p,\beta}\big(\beta(t)\big)-
D_{\beta^{-1}}p(t)\,\mbox{\upshape Sin}_{p,\beta}(t)\,.
\end{array}
\end{equation*}

\vspace{0.5em}
Regarding the proof of the second identity of Proposition 3.12, its last three lines
must be replaced by:

\noindent
hence, since
$$D_{\beta^{-1}}\mbox{E}_{p,\beta}\big(\beta(t)\big)=
\frac{\mbox{E}_{p,\beta}\big(\beta(t)\big)-\mbox{E}_{p,\beta}(t)}{t-\beta^{-1}(t)}=
\frac{\mbox{E}_{p,\beta}(t)-\mbox{E}_{p,\beta}\big(\beta(t)\big)}{t-\beta(t)}
\frac{\beta(t)-t}{t-\beta^{-1}(t)}=D_{\beta}\mbox{\upshape E}_{p,\beta}(t)
D_{\beta^{-1}}\beta(t)\,,$$
then,
\begin{equation*}
\begin{array}{lll}
D_{\beta^{-1}}D_{\beta}\mbox{E}_{p,\beta}(t) & = &
D_{\beta^{-1}}p(t)\mbox{E}_{p,\beta}\big(\beta(t)\big)+
 p\big(\beta^{-1}(t)\big)D_{\beta^{-1}}\beta(t)D_{\beta}\mbox{\upshape E}_{p,\beta}(t) \\ [1em]
& = & p(t)p\big(\beta^{-1}(t)D_{\beta^{-1}}\beta(t)\big)\mbox{\upshape E}_{p,\beta}\big(\beta(t)\big)+
D_{\beta^{-1}}p(t)\mbox{E}_{p,\beta}\big(\beta(t)\big)\,.
 \end{array}
\end{equation*}

\vspace{1em}
Following the previous amendments comes Corollary 3.13.

\vspace{0.5em}
\noindent
\textbf{Corollary 3.13.}
\emph{If} $\,p\,$ \emph{is the constant function} $\,p(t)=z\,$ \emph{on} $\,I\,$
\emph{then the following identities hold:}

\begin{equation*}
\begin{array}{l}
D_{\beta^{-1}}D_{\beta}\,\mbox{\upshape e}_{z,\beta}(t)=z^2\,\mbox{\upshape e}_{z,\beta}\big(\beta^{-1}(t)\big) \,,\quad
D_{\beta^{-1}}D_{\beta}\,\mbox{\upshape E}_{z,\beta}(t)=z^2D_{\beta^{-1}}\beta(t)\,\mbox{\upshape E}_{z,\beta}\big(\beta(t)\big) \,, \\ [0.8em]
D_{\beta^{-1}}D_{\beta}\,\mbox{\upshape sin}_{z,\beta}(t)=-z^2\,\mbox{\upshape sin}_{z,\beta}\big(\beta^{-1}(t)\big) \,,\quad
D_{\beta^{-1}}D_{\beta}\,\mbox{\upshape cos}_{z,\beta}(t)=-z^2\,\mbox{\upshape cos}_{z,\beta}\big(\beta^{-1}(t)\big) \,, \\ [0.8em]
D_{\beta^{-1}}D_{\beta}\,\mbox{\upshape Sin}_{z,\beta}(t)=-z^2D_{\beta^{-1}}\beta(t)\,\mbox{\upshape Sin}_{z,\beta}\big(\beta(t)\big) \,,\quad
D_{\beta^{-1}}D_{\beta}\,\mbox{\upshape Cos}_{z,\beta}(t)=-z^2D_{\beta^{-1}}\beta(t)\,\mbox{\upshape Cos}_{z,\beta}\big(\beta(t)\big) \,.
\end{array}
\end{equation*}

\vspace{1em}
\vspace{0.5em}
Remarking that $\,1+z\big(t-\beta(t)\big)\neq 0\,$ holds on $\,I\,$ whenever $\,z\,$ is
a constant on $\,I\,$ then, adjusting Corollary 3.14 to Corollary 3.13, we obtain:

\vspace{0.5em}
\noindent
\textbf{Corollary 3.14.}
Let $\,p\,$ be the constant function $\,p(t)=z\,$ on $\,I\,$.
Then, the $\,\beta$-exponential function ($2.11$) satisfy equation
$$-D_{\beta}\beta^{-1}(t)D_{\beta^{-1}}D_{\beta}\,\,\mbox{\upshape E}_{z,\beta}(t)=
-z^2\,\,\mbox{\upshape E}_{z,\beta}\big(\beta(t)\big)\,,$$ or, equivalently, satisfy
$$-D_{\beta}\beta^{-1}(t)D_{\beta^{-1}}D_{\beta}\,\mbox{\upshape E}_{z,\beta}(t)=
-\frac{z^2}{1+z\big(t-\beta(t)\big)}\,\mbox{\upshape E}_{z,\beta}(t)\,,$$
which shows that $\,\mbox{\upshape E}_{z,\beta}(t)\,$ satisfy ($3.3$)
or ($3.10$) with $\,r(t)=\frac{z^2}{1+z\big(t-\beta(t)\big)}\,$ in $\,I\,$ and $\,\lambda=0\,$.

\vspace{1em}
As a consequence of this new wording, the item (2) of \emph{Remark} 3.15 is no
longer valid.